\newcommand{\changedText}[1]{#1}  
\newcommand{\removedText}[1]{}  
\DeclareMathOperator{\Span}{span}
\newtheorem{thm}{Theorem}[section]
\newtheorem{lem}[thm]{Lemma}
\newdefinition{defn}{Definition}[section]
\theoremstyle{remark}
\begin{document} 



\begin{frontmatter}


\title{Enhanced group analysis of a semi-linear general bond-pricing equation}


\author[imecc]{Y.~Bozhkov}
\ead{bozhkov@ime.unicamp.br}

\author[cmcc]{S.~Dimas\corref{cor}}
\ead{spawn@math.upatras.gr}

\cortext[cor]{Corresponding author}


\address[imecc]{Instituto de Matem\'atica, Estat\'istica e Computa\c{c}\~ao Cient\'ifica - IMECC\\ 
Universidade Estadual de Campinas - UNICAMP, Rua S\'ergio Buarque de Holanda, $651$\\
$13083$-$859$ - Campinas, SP - Brasil\medskip}

\address[cmcc]{Centro de Matem\'atica, Computa\c{c}\~ao e Cogni\c{c}\~ao - CMCC\\ 
Universidade Federal do ABC - UFABC, Av. dos Estados, $5001$, Bairro Bang\'u\\
$09210$-$580$ - Santo Andr\'e, SP - Brasil}

\begin{abstract}
	The enhanced  group classification of a semi-linear generalization of a general bond-pricing equation is carried out by harnessing the underlying equivalence and additional equivalence transformations. We employ that classification to unearth the particular cases with a larger Lie algebra than the general case and use them to find non trivial invariant solutions under the terminal and the barrier option condition.
  \end{abstract}

\begin{keyword}
Bond-pricing equation \sep Lie point symmetry \sep group classification \sep barrier option \sep computer assisted research 
\MSC[2010] 35A20 \sep 70G65 \sep 70M60 \sep 97M30 \sep 68W30
\end{keyword}

\end{frontmatter}

\section{Introduction}

Recently, Sinkala et al. \removedText{have} introduced a general bond--pricing equation 
\begin{equation}\label{eq:BondPricing}
	u_t+\frac{1}{2}\rho^2x^{2\gamma} u_{xx}+(\alpha+\beta x-\lambda\rho x^\gamma)u_x-x u=0,
\end{equation}
where $\alpha,\beta,\gamma,\rho$ and $\lambda$ are real constants and $u=u(x,t)$ with $x>0$ \removedText{\cite{SiLeaHa2k8}} \changedText{(see \removedText{also} \cite{SiLeaHa2k8, MoKhaMo2k14, MaMaNaMo2k13} and references therein)}. The equation \removedText{have}\changedText{has} an interesting property: it encompasses some of the classical models of Financial Mathematics,  namely the Longstaff model ($\gamma=\delta=1/2,\alpha=\rho^2/4$), the Vacicek model ($\gamma=\delta=0,\beta\ne0$) and the Cox--Ingersoll--Ross model ($\gamma=\delta=1/2,\lambda=0$) \cite{Lo89,Va77,CoIngeRo85}.

In the present work we study --- from a mathematical viewpoint --- a semi-linear generalization of Eq.~\eqref{eq:BondPricing}\footnote{Immediately one can observe that Eq.~\eqref{eq:GBondPricing} contains also the celebrated Black--Scholes--Merton model \cite{BlaScho73} when $\alpha=\lambda=0,\gamma=1$ and $f(x,u) = \beta u$.}, \changedText{that is}
\begin{equation}\label{eq:GBondPricing}
	u_t+\frac{1}{2}\rho^2x^{2\gamma} u_{xx}+(\alpha+\beta x-\lambda\rho x^\delta)u_x-f(x,u)=0,\ \rho\ne0,\ \delta\ne0,1,
\end{equation}
using the analytic machinery provided by symmetry analysis. \changedText{Our aim is to find the complete group classification of Eq.~\eqref{eq:GBondPricing} and use it to propose nonlinear models with potential interest to Financial Mathematics and other scientific areas that use models like \eqref{eq:BondPricing}. In other words, to find specific instances of the function $f(x,u)$ that enlarge the  Lie point symmetry group $\mathcal{G}$ of the most general case: for an arbitrary function $f$ \cite[p. 178]{Olver2k}. In order to achieve that goal we use elements from the enhanced group analysis, namely equivalence and additional transformations, in order to simplify the task at hand \cite{PoEshra2k5,IvaPoSo2k10}.} \removedText{We use elements from the enhanced group analysis, namely equivalence transformations, in order to simplify the main task at hand, the complete group classification of Eq.~\eqref{eq:GBondPricing}. Recall that to perform a complete group classification of a differential equation (or a system of differential equations) involving arbitrary functions or/and parameters, means to find the Lie point symmetry group $\mathcal{G}$ for the most general case, and then to find specific forms of the differential equation for which $\mathcal{G}$ can be enlarged, 
. Having the group classification specific cases of Eq.~\eqref{eq:GBondPricing} can be pinpointed admitting a larger set of symmetries, and consequently, a better likelihood of constructing invariant solutions.  A fact that plays a prominent role when one considers also a set of initial or boundary conditions along the PDE.}

\changedText{Apart from indicating potential models, a larger set of symmetries plays a prominent role when one also considers  a set of initial or boundary conditions along the PDE. Since then, the likelihood of a certain linear combination of symmetries to admit the initial or boundary problem as a whole increases. Hence enabling us to construct invariant solutions for the initial or boundary problem.} 

Since we obtained  Eq.~\eqref{eq:GBondPricing} as a generalization of the  model \eqref{eq:BondPricing} we shall also consider two specific problems associated with it,  
\begin{enumerate}
\item The terminal condition  
	\begin{equation}\label{cond:terminal}
		u(x,T)=1, 
	\end{equation}
	and 
\item The barrier option condition
	\begin{subequations}\label{cond:barrier}
		\begin{align}
			&u(H(t),t)=R(t),\label{cond:barriera}\\ 
			&u(x,T)=\max(x-K,0),	\label{cond:barrierb}
		\end{align}
	\end{subequations}
\end{enumerate}
where $T$ is the terminal time. 

In Financial Mathematics, the former describes the evolution of standard or ``vanilla"  products \cite{BlaScho72,BlaScho73,Me74,Ugur2k8}, while the latter an exotic type of products \cite{ha2k11, Kwo2k8, HaSoLea2k13}. Moreover, a common assumption for the function $H$ --- termed also as the \emph{barrier} function  --- is to have the exponential form
\begin{equation}\label{H0}
H(\tau) = b Ke^{-a \tau},
\end{equation}
where $a\ge0$, $0\le b\le1$ and $\tau=t-T$ \cite[p. 187]{Kwo2k8}. 

As we previously mentioned the key analytical tool used in this work is the symmetry analysis of the Eq.~\eqref{eq:GBondPricing}. One of the advantages of this approach is that it provides a \changedText{well-defined} algorithmic procedure which essentially enables one to find the involved linearizing transformations, conservation laws, invariant solutions, etc. In fact, various works on the classical financial models mentioned above have a connection with the heat equation \cite{GazIbr98, DiAndTsLe2k9}. As probably expected,  Eq.~\eqref{eq:GBondPricing} shares a connection with the heat equation\removedText{, and}\changedText{. M}ore precisely \changedText{we prove here that Eq.~\eqref{eq:GBondPricing} is linked} with the heat equation with nonlinear source,
\begin{equation}\label{eq:HeatNonLinearSource}
	u_t=u_{xx}+f(x,u).
\end{equation}
Furthermore, we show that the exponential form \eqref{H0} usually utilized in the literature is admitted by the symmetries found. A fact that reinforces the \emph{physical} importance of this specific choice by the specialist of the field. 

\changedText{At this point we would like to mention that for}\removedText{For} all the calculations involved the symbolic package SYM for Mathematica was extensively used, both for the interactive manipulation of the found symmetries as well as for determining the equivalence transformation and  the classification of  Eq.~\eqref{eq:GBondPricing} \cite{Dimas2k8}. 

This paper is organized as follows.\removedText{In section 2 the basic concepts of the Lie point symmetry approach to differential equations used in the paper are presented. In section 2 we present the basic concepts of the Lie point symmetry approach and equivalence transformations used in this work. In section 3 the continuous equivalence transformations are found and with their help the complete group classification of Eq.~\eqref{eq:GBondPricing} is obtained.} \changedText{In section 2 we obtain the set of the  continuous equivalence transformations and with their help the complete group classification of Eq.~\eqref{eq:GBondPricing}.} \removedText{In section 3  specific examples of invariant  solutions  under the specific boundary problems studied, the ``vanilla" option and the barrier option, are given.} \changedText{In section 3  we give specific examples of invariant  solutions under the specific boundary problems \changedText{which we study}: the ``vanilla" option and the barrier option.} \removedText{Finally, in section 4 the results of this work are discussed.} Finally, in section 4 we discuss the results of \removedText{this}\changedText{the present} work.

\section{Group classification}

In this section we proceed with the group classification of Eq.~\eqref{eq:GBondPricing} following the same principles as in our previous works, see \cite{BoDi2k14a, BoDi2k14b}.  First, the best representative for the class of equations \eqref{eq:GBondPricing} is obtained utilizing its equivalence algebra. To do that, \changedText{we construct} the continuous part of the equivalence group \removedText{is constructed} and with its help \changedText{we zero out} as many \changedText{arbitrary elements} as possible\removedText{ arbitrary elements}\removedText{are zeroed}.
Initially, observe \changedText{that} the \emph{trivial} transformation 
\begin{equation}\label{mainRes:equivalenceTransformation}
	\begin{aligned}
		&\tilde x = x,\ \tilde t = \frac{\rho^2}{2}t,\ \tilde u = u,\ \tilde\alpha = \frac{2}{\rho^2}\alpha,\ \tilde\beta =\frac{2}{\rho^2}\beta,\  \tilde\gamma=\gamma,\  \tilde\delta=\delta,\ \tilde \lambda = \frac{\sqrt{2}}{\rho}\lambda,\\
		 &\tilde f =\frac{2}{\rho^2}f.
	\end{aligned}
\end{equation}
transforms Eq.~\eqref{eq:GBondPricing}  into the equation 
\begin{equation}\label{main:GBondPricing2}
	u_{\tilde t}+x^{2 \gamma} u_{ xx}+(\tilde  \alpha+\tilde \beta x-\sqrt{2}\tilde \lambda {x}^{\delta}){u}_{x}-\tilde f(x, u)=0.
\end{equation}
We proceed in obtaining the equivalence algebra for Eq.~\eqref{main:GBondPricing2} without making any assumption on the coefficients of the extended infinitesimal operator, 
\begin{multline}\label{igEx}
	\mathfrak{X}={\xi }^1\frac{\partial }{\partial x}+{\xi }^2\frac{\partial }{\partial t}+ \eta_1  \frac{\partial }{\partial u}+\\
	 \eta_2  \frac{\partial }{\partial \alpha}+ \eta_3  \frac{\partial }{\partial \beta}+ \eta_4  \frac{\partial }{\partial \gamma}+ \eta_5  \frac{\partial }{\partial \delta}+ \eta_6  \frac{\partial }{\partial \lambda}+ \eta_7  \frac{\partial }{\partial \rho}+ \eta_8  \frac{\partial }{\partial f},
\end{multline}
where in \eqref{igEx} the coefficients of the operator depend on the extended space $(x,\tilde t,u,\tilde \alpha,\tilde \beta,\gamma,\delta,\tilde \lambda,\tilde f)$\footnote{In other words we look for \emph{generalized equivalence transformations}.}.	
\begin{thm}
The equivalence algebra $\hat{\mathcal{L}}_{\widetilde{\mathcal{E}}}$ of  the class of equations \eqref{main:GBondPricing2} is generated by the following vector fields
{\footnotesize\begin{align}\label{main:firstvec}
	& \mathcal{F}_1({\tilde\alpha} ,{\tilde\beta} ,\gamma ,\delta ,{\tilde\lambda} )\partial _{\tilde t}\\ 
	& \mathcal{F}_2({\tilde\alpha} ,{\tilde\beta} ,\gamma ,\delta ,{\tilde\lambda} )(\tilde f\partial_{\tilde f}+u \partial _u) \\
	\begin{split}
		&\mathcal{F}_3 (x, {\tilde\alpha} ,{\tilde\beta} ,\gamma ,\delta ,{\tilde\lambda} )\partial _u+\\
		&\left(\left({\tilde\alpha} + {\tilde\beta} x -\sqrt{2}  {\tilde\lambda} x^{\delta } \right)\mathcal{F}_{3x} (x, {\tilde\alpha} ,{\tilde\beta} ,\gamma ,\delta ,{\tilde\lambda} ) +x^{2 \gamma }\mathcal{F}_{3xx} (x, {\tilde\alpha} ,{\tilde\beta} ,\gamma ,\delta ,{\tilde\lambda} ) \right)\partial _{\tilde f}
		\end{split}\\
	\begin{split}\label{main:betavector}
		& \mathcal{F}_4({\tilde\alpha} ,{\tilde\beta} ,\gamma ,\delta ,{\tilde\lambda} )\left(\frac{x^{2(1- \gamma) }}{\gamma -1}u\partial _u+4\partial _{{\tilde\beta} }+\right.\\
		&\left. \quad\frac{x^{-2 \gamma } \left(\tilde f x^2+2 (\gamma -1)   \left((2 \gamma -1)x^{2 \gamma } +\sqrt{2}{\tilde\lambda} x^{1+\delta }  -{\tilde\alpha}  x - {\tilde\beta} x^2 \right)u\right)}{\gamma -1}\partial _{\tilde f} \right)
	\end{split}\\
	 \begin{split}\label{main:alphavector}
	 	& \mathcal{F}_5({\tilde\alpha} ,{\tilde\beta} ,\gamma ,\delta ,{\tilde\lambda} )\left(\frac{x^{1-2 \gamma }}{2 \gamma-1}u\partial_u+2\partial _{{\tilde\alpha} }+\right.\\
	 	&\left.\qquad\quad\frac{x^{-1-2 \gamma } \left(\tilde f x^2+ (2 \gamma-1 ) \left( 2\gamma x^{2 \gamma }  +\sqrt{2} {\tilde\lambda}  x^{1+\delta } -{\tilde\alpha} x  -{\tilde\beta} x^2 \right)u\right)}{2 \gamma-1 }\partial _{\tilde f} \right)
		\end{split}\\
		\begin{split}\label{main:lambdavector}
			& \mathcal{F}_6({\tilde\alpha} ,{\tilde\beta} ,\gamma ,\delta ,{\tilde\lambda} )\left(\frac{\sqrt{2}  x^{1-2 \gamma +\delta }}{1-2 \gamma +\delta }u\partial _u+2\partial _{{\tilde\lambda} }+\frac{x^{\delta-1-2 \gamma }}{\delta+1-2 \gamma } \left(\sqrt{2} \tilde f x^2+\right.\right.\\
			&\left.\left. \quad(\delta +1-2 \gamma ) \left(\sqrt{2}{\tilde\alpha}  x +\sqrt{2}{\tilde\beta} x^2  +\sqrt{2} x^{2 \gamma } (\delta-2 \gamma )-2 {\tilde\lambda} x^{1+\delta } \right)u\right)\partial _{\tilde f} \right)
		\end{split}\\
		\begin{split}
			&  \mathcal{F}_7({\tilde\alpha} ,{\tilde\beta} ,\gamma ,\delta ,{\tilde\lambda} )\left(\frac{\sqrt{2}  {\tilde\lambda} x^{1-2 \gamma +\delta } ((1-2 \gamma+\delta ) \log x-1)}{(1-2 \gamma +\delta )^2}u\partial _u+2\partial _{\delta }+\right.\\
			&\qquad{\tilde\lambda} x^{-1+\delta }   \left(\left(u-\frac{\tilde f x^{2(1- \gamma) }}{(1-2 \gamma +\delta )^2}\right)+\frac{x^{-2 \gamma } \log x}{1-2 \gamma +\delta }\left( \tilde f x^2+\right.\right.\\
			& \left.\qquad\qquad (1-2 \gamma +\delta ) \left({\tilde\alpha} x  +{\tilde\beta} x^2  + (\delta -2 \gamma)x^{2 \gamma }-\sqrt{2} {\tilde\lambda} x^{1+\delta }  \right)u\right)\biggr)\partial _{\tilde f} \Biggr)
		\end{split}\\
		\begin{split}
			& \mathcal{F}_8({\tilde\alpha} ,{\tilde\beta} ,\gamma ,\delta ,{\tilde\lambda} )\left(2 x^{\gamma }\partial _x+u x^{-1-\gamma } \left( \gamma x^{2 \gamma }  +\sqrt{2}{\tilde\lambda}  x^{1+\delta } - {\tilde\alpha} x -{\tilde\beta} x^2 \right)\partial _u+\right.\\
			&\quad x^{-3-\gamma } \left(\tilde f \left( \gamma x^{2(1+ \gamma) } +\sqrt{2}{\tilde\lambda}  x^{3+\delta } - {\tilde\alpha} x^3 -{\tilde\beta}  x^4  \right)+u \left( {\tilde\beta} ^2 (\gamma -1)x^4-\right.\right.\\
			&\qquad 2 {\tilde\alpha}  \gamma x^{1+2 \gamma }  + {\tilde\alpha} ^2 \gamma x^2 + {\tilde\alpha}  {\tilde\beta}  (2 \gamma -1)x^3+ \gamma  \left(2-3 \gamma +\gamma ^2\right)x^{4 \gamma }-\\
			&\qquad\qquad \sqrt{2}  (2 \gamma -\delta ) (\delta -1) {\tilde\lambda} x^{1+2 \gamma +\delta }+\sqrt{2}  {\tilde\alpha}  (\delta -2 \gamma) {\tilde\lambda} x^{2+\delta }+\\
			&\qquad\qquad\qquad\left.\left.\left. \sqrt{2} {\tilde\beta}  (1-2 \gamma +\delta ) {\tilde\lambda} x^{3+\delta } -2  (\delta -\gamma) {\tilde\lambda}^2 x^{2(1+ \delta) } \right)\right)\partial _{\tilde f}\right)
		\end{split}\\
		\begin{split}
			& \mathcal{F}_9({\tilde\alpha} ,{\tilde\beta} ,\gamma ,\delta ,{\tilde\lambda} )\left( \frac{2 x}{1-\gamma }\partial _x+4 \tilde t\partial _{\tilde t}+\right.\\
			&\quad\frac{u x^{-2 \gamma } \left({\tilde\alpha} x+{\tilde\beta} x^2  +4 (\gamma -1) x^{2 \gamma } -\sqrt{2} {\tilde\lambda} x^{1+\delta }  \right)}{\gamma -1}\partial _u+\\
			& \ \frac{x^{-1-2 \gamma }}{\gamma -1} \left(\tilde f x^2 \left({\tilde\alpha} +{\tilde\beta} x  -\sqrt{2} {\tilde\lambda} x^{\delta }  \right)-u \left(2  {\tilde\beta} ^2 (\gamma -1)x^3+ {\tilde\alpha}^2 (2 \gamma-1 ) x - \right.\right.\\
			&\qquad 2  {\tilde\alpha}  \gamma  (2 \gamma -1 )x^{2 \gamma }+{\tilde\alpha}  {\tilde\beta}  (4 \gamma -3)x^2-2 {\tilde\beta}  \left(1-3 \gamma +2 \gamma ^2\right)x^{1+2 \gamma } +\\
			&\qquad\qquad\sqrt{2} {\tilde\alpha}  (2-4 \gamma +\delta ) {\tilde\lambda} x^{1+\delta } +\sqrt{2}  {\tilde\beta}  (3-4 \gamma +\delta ) {\tilde\lambda} x^{2+\delta }+\\
			&\qquad\ \left.\left.\left.\sqrt{2}  \left(\delta -2 \gamma \right)\left(\delta -2 \gamma +1 \right) {\tilde\lambda} x^{2 \gamma +\delta } -2  (1-2 \gamma +\delta ){\tilde\lambda} ^2x^{1+2 \delta }\right)\right)\partial _{\tilde f}\right)
		\end{split}\\[12pt]
		\begin{split}\label{main:lastvec}
			&\mathcal{F}_{10}({\tilde\alpha} ,{\tilde\beta} ,\gamma ,\delta ,{\tilde\lambda} )\left(4\partial _{\gamma }-\frac{4 x (1+(\gamma -1) \log x)}{(\gamma -1)^2}\partial _x+\right.\\
			&\quad\frac{x^{-2 \gamma } u}{(2 \gamma -1)^2 (\gamma -1)^2 (1-2 \gamma +\delta )^2}\left(x \left(2 {\tilde\alpha}  \left(2 \gamma ^2-1\right) (1-2 \gamma +\delta )^2+\right.\right.\\
			&\left.(1-2 \gamma)^2 \left({\tilde\beta}  (1-2 \gamma +\delta )^2 x -2 \sqrt{2}  \left(2 \gamma ^2+\delta(2 -4 \gamma +\delta)-1\right) {\tilde\lambda} x^{\delta }\right)\right)+\\
			&\quad2 \left(1-3 \gamma +2 \gamma ^2\right) (1-2 \gamma+\delta ) \left((1-2 \gamma ) (1-2 \gamma +\delta )x^{2 \gamma } + \right.\\
			&\quad\left.\left.{\tilde\alpha}  (1-2 \gamma +\delta )x-\sqrt{2} (2 \gamma -1) (\delta -1) {\tilde\lambda}  x^{1+\delta } \right) \log x\right)\partial _u+\\
			&\quad\frac{x^{-2 (1+\gamma )}}{(2 \gamma -1)^2 (\gamma-1 )^2 (1-2 \gamma +\delta )^2}\times\\
			&\quad\Bigl(2 u (1-2 \gamma )^2 (1-2 \gamma +\delta )^2 \left({\tilde\alpha}  {\tilde\beta}  (1-2 \gamma ) x^3 +(\gamma-1)x^{4 \gamma } -\right.\\
			& \quad{\tilde\beta} ^2 (\gamma -1)x^4+2 {\tilde\beta}  (\gamma -1)^2x^{2(1+ \gamma) } -{\tilde\alpha} ^2 \gamma x^2  +2  {\tilde\alpha}  \left(1-\gamma +\gamma ^2\right)x^{1+2 \gamma }-\\
			&\quad\sqrt{2}  {\tilde\alpha}  (\delta -2 \gamma) {\tilde\lambda} x^{2+\delta } -\sqrt{2} {\tilde\beta}  (1-2 \gamma +\delta ) {\tilde\lambda} x^{3+\delta }  -\\
			&\quad\left.\sqrt{2}  \left(2+2 \gamma ^2-\delta +\delta ^2-2 \gamma  (1+\delta )\right) {\tilde\lambda} x^{1+2 \gamma +\delta } +2  (\delta -\gamma) {\tilde\lambda} ^2x^{2(1+ \delta) }\right)+\\
			&\quad \tilde f x^3 \left(2 {\tilde\alpha}  \left(2 \gamma ^2-1\right) (1-2 \gamma +\delta )^2+(1-2 \gamma )^2 \left({\tilde\beta}  (1-2 \gamma +\delta )^2x-\right.\right.\\
			&\quad\left.\left.2 \sqrt{2} \left(-1+2 \gamma ^2+2 \delta -4 \gamma  \delta +\delta ^2\right) {\tilde\lambda}  x^{\delta }\right)\right)-\\
			&\quad2 x \left(1-3 \gamma +2 \gamma ^2\right) (2 \gamma -\delta-1 ) \left(\tilde f x \left((2 \gamma -1) (2 \gamma -\delta -1)x^{2 \gamma } +\right.\right.\\
			&\quad\left.{\tilde\alpha}  (1-2 \gamma +\delta )x-\sqrt{2}  (2 \gamma -1) (\delta -1) {\tilde\lambda} x^{1+\delta } \right)+\\
			&\quad u (2 \gamma -1) (2 \gamma -\delta -1) \left({\tilde\alpha} ^2 x +{\tilde\alpha}  {\tilde\beta} x^2  -2 {\tilde\alpha}  \gamma x^{2 \gamma } +\sqrt{2}  {\tilde\alpha}  (\delta -2) {\tilde\lambda} x^{1+\delta } +\right.\\
			&\quad\sqrt{2} {\tilde\beta}  (\delta -1) {\tilde\lambda} x^{2+\delta }  -\sqrt{2} (2 \gamma -\delta ) (\delta -1) {\tilde\lambda} x^{2 \gamma +\delta } -\\
			&\quad\quad\left.\left.\left.2  (\delta -1) {\tilde\lambda}^2x^{1+2 \delta }\right)\right) \log x\right)\partial _{\tilde f} \biggr)
		\end{split}
\end{align}}
where $\mathcal{F}_i,\ i=1,\dots,10$ are arbitrary real functions. 
\end{thm}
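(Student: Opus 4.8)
The plan is to compute $\hat{\mathcal{L}}_{\widetilde{\mathcal{E}}}$ directly from the infinitesimal invariance criterion, treating the arbitrary elements $\tilde\alpha,\tilde\beta,\gamma,\delta,\tilde\lambda$ and $\tilde f$ as additional dependent variables on the augmented space and the generator $\mathfrak{X}$ of \eqref{igEx} as a vector field there (the $\partial_\rho$ direction may be dropped, since $\rho$ has already been normalized by the trivial transformation \eqref{mainRes:equivalenceTransformation} and no longer appears in \eqref{main:GBondPricing2}). First I would prolong $\mathfrak{X}$ to second order in the jet variables $u_x,u_{\tilde t},u_{xx}$ by the usual formulas, but with total derivatives taken on the augmented base; since $\tilde f=\tilde f(x,u)$ is a genuine arbitrary function, I would also extend the action to its first derivatives $\tilde f_x,\tilde f_u$. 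In keeping with the generalized-equivalence setting indicated in the footnote, the working hypothesis is that the parameter infinitesimals $\eta_2,\dots,\eta_6$ are independent of $(x,\tilde t,u)$ while being otherwise unconstrained, so that the parameters stay constant from the viewpoint of the equation. The invariance condition is that $\mathrm{pr}\,\mathfrak{X}$ annihilate \eqref{main:GBondPricing2} on the manifold it cuts out.

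Next I would split the single resulting identity into the determining system. Off the solution manifold the identity is a (generalized) polynomial in the functionally independent quantities $u_{\tilde t},u_x,u_{xx}$ and $\tilde f,\tilde f_x,\tilde f_u$, so collecting the coefficient of each independent power product yields a linear PDE system for $\xi^1,\xi^2,\eta_1,\eta_2,\dots,\eta_6,\eta_8$. Schematically, the coefficient of $u_{xx}$ relates $\xi^1$ to the $\gamma$-infinitesimal $\eta_4$ through the factor $x^{2\gamma}$; the coefficient of $u_{\tilde t}$ constrains $\xi^2$; the coefficient of $u_x$ couples $\xi^1$ and $\eta_1$ to the parameter infinitesimals $\eta_2,\eta_3,\eta_5,\eta_6$ through the drift $\tilde\alpha+\tilde\beta x-\sqrt{2}\,\tilde\lambda x^{\delta}$; and the $\tilde f$-free remainder pins down the $\partial_{\tilde f}$-component $\eta_8$ in terms of the rest.

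I would then integrate this system. Because the parameters enter the base equation as constants but are coordinates on the augmented space, the determining equations leave the \emph{parameter}-dependence of $\eta_2,\dots,\eta_6$ entirely free; this is exactly the freedom that surfaces as the arbitrary functions $\mathcal{F}_i(\tilde\alpha,\tilde\beta,\gamma,\delta,\tilde\lambda)$ prefactoring each generator. The dependence on $x$ and $u$, by contrast, is rigidly determined: matching the generalized monomials $x^{2\gamma},x^{\delta},x^{1-2\gamma+\delta},\dots$ forces the characteristic exponents recorded in \eqref{main:firstvec}--\eqref{main:lastvec}, and back-substitution assembles the bulky rational $\partial_{\tilde f}$-components term by term. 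The $\log x$ factors carried by the two generators that activate $\partial_\delta$ and $\partial_\gamma$ appear automatically, since moving along those directions differentiates the coefficients $x^{\delta}$ and $x^{2\gamma}$ and thereby brings down a factor of $\log x$; the rational denominators $\gamma-1$, $2\gamma-1$, $1-2\gamma+\delta$ mark the resonant parameter values at which these branches degenerate and must be handled confluently. Finally I would verify closure under the commutator, confirming that the ten fields span a Lie algebra and that no independent solution of the determining system was missed.

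The main obstacle is not a single conceptual step but the size and delicacy of the integration. The arbitrary exponents $\gamma$ and $\delta$ turn ordinary polynomial splitting into splitting over power products with parametric exponents; each generator drags along a lengthy $\partial_{\tilde f}$-component whose many terms must be reconciled exactly; and the confluent $\log x$ branches at the resonant values of $\gamma,\delta$ are easy to overlook. This is precisely where the symbolic package SYM is indispensable, both to generate and reduce the determining equations and to certify the closed form \eqref{main:firstvec}--\eqref{main:lastvec}.
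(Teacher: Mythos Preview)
Your proposal is correct and follows essentially the same route as the paper: prolong the operator \eqref{igEx} on the augmented space, impose the infinitesimal invariance criterion, split into the determining system, and integrate (with computer algebra doing the heavy lifting). The one procedural difference worth flagging is that the paper does \emph{not} take the $(x,\tilde t,u)$-independence of the parameter infinitesimals as a working hypothesis; rather, it adjoins to \eqref{main:GBondPricing2} the explicit auxiliary system $\tilde\alpha_x=\tilde\alpha_{\tilde t}=\tilde\alpha_u=\cdots=\tilde f_{\tilde t}=0$ and applies the prolonged operator to this full extended system, so that the independence of $\eta_2,\dots,\eta_6$ from $(x,\tilde t,u,\tilde f)$ emerges as part of the determining equations themselves. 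That is in fact what ``generalized equivalence transformations'' means in the sense of the footnote---allowing all coefficients to depend a priori on the whole extended space and letting the auxiliary constraints cut them down---so your phrasing slightly inverts the logic, though the outcome is identical here.
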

\begin{proof}
By applying the second order \changedText{augmented} prolongation to the extended system 
\begin{multline*}
	u_{\tilde t}+x^{2\gamma(x,\tilde t,u)} u_{xx}-\tilde f(x,\tilde t,u)+\\
		(\tilde\alpha(x,\tilde t,u)+\tilde \beta(x,\tilde t,u) x-\sqrt{2} \tilde \lambda(x,\tilde t,u) x^{\delta(x,\tilde  t,u)})u_x=0,\\
		{\tilde \alpha}_x={\tilde \alpha}_{\tilde t}={\tilde \alpha}_u={\tilde \beta}_x={\tilde \beta}_{\tilde t}={\tilde \beta}_u=\gamma_x=\gamma_{\tilde t}=\gamma_u=\delta_x=\delta_{\tilde t}=\delta_u=\\
		{\tilde \lambda}_x={\tilde \lambda}_{\tilde t}={\tilde \lambda}_u=\tilde f_{\tilde t}=0,
\end{multline*}
modulo the extended system itself, we get the system of determining equations:
\begin{gather*}
	{\eta _3}_{\tilde f}=0 ,\ {\eta _4}_{\tilde f}=0 ,\ {\eta _5}_{\tilde f}=0 ,\ {\eta _6}_{\tilde f}=0 ,\ {\eta _7}_{\tilde f}=0 ,\ \xi ^2_{\tilde f}=0 ,\ \xi ^2_{{\tilde f\tilde  f}}=0 ,\ {\eta _3}_{u}=0 ,\\ 
	{\eta _4}_{u}=0 ,\ {\eta _5}_{u}=0 ,\ {\eta _6}_{u}=0 ,\ {\eta _7}_{u}=0 ,\ \xi ^2_{u}=0 ,\ \xi ^2_{u\tilde f}=0 ,\ \xi ^2_{uu}=0 ,\ {\eta _1}_{\tilde t}=0 ,\\
	 {\eta _2}_{\tilde t}=0 ,\ {\eta _3}_{\tilde t}=0 ,\ {\eta _4}_{\tilde t}=0 ,\ {\eta _5}_{\tilde t}=0 ,\ {\eta _6}_{\tilde t}=0 ,\ {\eta _7}_{\tilde t}=0 ,\ \xi ^1_{\tilde t}=0 ,\ {\eta _3}_{x}=0 ,\\
 	{\eta _4}_{x}=0 ,\ {\eta _5}_{x}=0 ,\ {\eta _6}_{x}=0 ,\ {\eta _7}_{x}=0 ,\ \xi ^2_{x}=0 ,\ \xi ^2_{\tilde f}=0 ,\ {\eta _1}_{\tilde f}-\tilde f \xi ^2_{\tilde f}=0 ,\\
 	{\eta _1}_{\tilde f}-\tilde f \xi ^2_{\tilde f}=0 ,\ \xi ^1_{\tilde f}-\left({\tilde \alpha} +{\tilde \beta} x -\sqrt{2} {\tilde \lambda} x^{\delta } \right) \xi ^2_{\tilde f}=0 ,\  2 \xi ^2_{\tilde f}-{\eta _1}_{{\tilde f\tilde  f}}+\tilde f \xi ^2_{{\tilde f\tilde  f}}=0 ,\\
	 \xi ^1_{{\tilde f\tilde  f}}-\left({\tilde \alpha} +{\tilde \beta} x -\sqrt{2} {\tilde \lambda} x^{\delta } \right) \xi ^2_{{\tilde f\tilde  f}}=0 ,\ \xi ^1_{u\tilde f}-\left({\tilde \alpha} +{\tilde \beta} x -\sqrt{2} {\tilde \lambda} x^{\delta } \right) \xi ^2_{u\tilde f}=0 ,\\
 	\xi ^1_{\tilde f}-\left({\tilde \alpha} +{\tilde \beta} x -\sqrt{2} {\tilde \lambda} x^{\delta } \right) \xi ^2_{\tilde f}=0 ,\ \xi ^1_{{\tilde f\tilde  f}}-\left({\tilde \alpha} +{\tilde \beta} x -\sqrt{2} {\tilde \lambda} x^{\delta } \right) \xi ^2_{{\tilde f\tilde  f}}=0 ,\\
 	\xi ^1_{uu}-\left({\tilde \alpha} +{\tilde \beta} x -\sqrt{2} {\tilde \lambda} x^{\delta } \right) \xi ^2_{uu}=0 ,\ {\eta _1}_{{\tilde f\tilde  f}}-2 \xi ^2_{\tilde f}-\tilde f \xi ^2_{{\tilde f\tilde  f}}=0 ,\\
 	\left({\tilde \alpha} x+{\tilde \beta} x^2+2 \gamma x^{2 \gamma } -\sqrt{2} {\tilde \lambda} x^{1+\delta } \right) \xi ^2_{\tilde f}+x^{1+2 \gamma } \xi ^2_{x\tilde f}-x \xi ^1_{\tilde f}=0 ,\\
	\left({\tilde \alpha} x+{\tilde \beta} x^2+2 \gamma x^{2 \gamma } -\sqrt{2} {\tilde \lambda} x^{1+\delta } \right) \xi ^2_{u}+x^{1+2 \gamma } \xi ^2_{xu} -x \xi ^1_{u}=0 ,\\
	 \tilde f \xi ^1_{\tilde f}-\tilde f \left({\tilde \alpha} +{\tilde \beta} x -\sqrt{2} {\tilde \lambda} x^{\delta } \right) \xi ^2_{\tilde f}+2 x^{2 \gamma } \left(\xi ^2_{x}-{\eta _1}_{x\tilde f}+\tilde f \xi ^2_{x\tilde f}\right)=0 ,\\
	\left(3 {\tilde \alpha} x+3 {\tilde \beta} x^2+4 \gamma x^{2 \gamma } -3 \sqrt{2} {\tilde \lambda} x^{1+\delta } \right) \xi ^2_{\tilde f}+2 x^{1+2 \gamma } \xi ^2_{x\tilde f} -2 x \xi ^1_{\tilde f}=0 ,\\	
	 \begin{split}
	 	&\left({\tilde \alpha} +{\tilde \beta} x -\sqrt{2} {\tilde \lambda} x^{\delta } \right) {\eta _1}_{\tilde f}-\\
		&\qquad\qquad \tilde f \left({\tilde \alpha} +{\tilde \beta} x -\sqrt{2} {\tilde \lambda} x^{\delta } \right) \xi ^2_{\tilde f}-2 x^{2 \gamma } \left(\xi ^2_{x}-{\eta _1}_{x\tilde f}+\tilde f \xi ^2_{x\tilde f}\right)=0,
	\end{split}\\
	\begin{split}
 		&2 \left({\tilde \beta} x -\sqrt{2}\delta  {\tilde \lambda} x^{\delta }  \right) \xi ^2_{u}+\\
		&\qquad\qquad x \left({\eta _1}_{uu}-\tilde f \xi ^2_{uu}-2 \xi ^1_{xu}+2 {\tilde \alpha}  \xi ^2_{xu}+2 {\tilde \beta} x  \xi ^2_{xu}-2 \sqrt{2} {\tilde \lambda} x^{\delta }  \xi ^2_{xu}\right)=0,
	\end{split}\\
	\begin{split}
 		&\left({\tilde \beta} x -\sqrt{2}  \delta  {\tilde \lambda} x^{\delta } \right) \xi ^2_{\tilde f}+\\
		&\qquad\qquad\qquad x \left({\eta _1}_{u\tilde f}-\tilde f \xi ^2_{u\tilde f}-\xi ^1_{x\tilde f}+{\tilde \alpha}  \xi ^2_{x\tilde f}+{\tilde \beta} x  \xi ^2_{x\tilde f}-\sqrt{2} {\tilde \lambda} x^{\delta }  \xi ^2_{x\tilde f}-\xi ^2_{u}\right)=0 ,
	\end{split}\\
	\begin{split}
	 	&2 x \log x \eta _5+2 \gamma  \xi ^1+\tilde f x \xi ^2_{u}+x \xi ^2_{\tilde t}-2 x \xi ^1_{x}+x^{1+2 \gamma } \xi ^2_{xx}\\
		&\qquad\qquad \qquad\qquad\qquad\qquad  +(3 {\tilde \alpha}  x +3 {\tilde \beta}  x^2 +4 \gamma x^{2 \gamma } - 3 \sqrt{2} {\tilde \lambda} x^{1+\delta })  \xi ^2_{x}=0 ,
	\end{split}\\
	\begin{split}
	 	&\tilde f {\eta _1}_{u}-\eta _2-f^2 \xi ^2_{u}+{\eta _1}_{\tilde t}-\tilde f \xi ^2_{\tilde t}+{\tilde \alpha}  {\eta _1}_{x}+{\tilde \beta} x  {\eta _1}_{x}-\sqrt{2} {\tilde \lambda} x^{\delta }   {\eta _1}_{x}-\tilde f {\tilde \alpha}  \xi ^2_{x}-\tilde f {\tilde \beta} x  \xi ^2_{x}+\\
		&\qquad\qquad \sqrt{2} \tilde f {\tilde \lambda} x^{\delta }   \xi ^2_{x}+x^{2 \gamma } {\eta _1}_{xx} -\tilde f x^{2 \gamma } \xi ^2_{xx}=0 ,
	\end{split}\\
	\begin{split}
		 &\left({\tilde \alpha}^2 x+2 {\tilde \beta} x^{1+2\gamma }  +2 {\tilde \alpha} {\tilde \beta} x^2 +{\tilde \beta} ^2 x^3 -2 \sqrt{2}{\tilde \alpha}  {\tilde \lambda}  x^{1+\delta } -2 \sqrt{2} {\tilde \beta}  {\tilde \lambda}  x^{2+\delta } -\right.\\
		 &\qquad \left.2 \sqrt{2} \delta  {\tilde \lambda} x^{2 \gamma +\delta } +2 {\tilde \lambda} ^2 x^{1+2 \delta }\right) \xi ^2_{\tilde f}-x \left({\tilde \alpha} +{\tilde \beta} x -\sqrt{2} {\tilde \lambda} x^{\delta }  \right) \xi ^1_{\tilde f}+\\
		 &\qquad\qquad 2 x^{1+2 \gamma } \left({\eta _1}_{u\tilde f}-\xi ^2_{u}-\tilde f \xi ^2_{u\tilde f}-\xi ^1_{x\tilde f}+{\tilde \alpha}  \xi ^2_{x\tilde f}+{\tilde \beta} x  \xi ^2_{x\tilde f}-\sqrt{2} {\tilde \lambda} x^{\delta }   \xi ^2_{x\tilde f}\right)=0 ,
	 \end{split}\\
	 \begin{split} 
 		&x \eta _3+x^2 \eta _4-\sqrt{2} {\tilde \lambda}  x^{1+\delta } \log x \eta _6-\sqrt{2} x^{1+\delta } \eta _7+({\tilde \beta} x -\sqrt{2}\delta  {\tilde \lambda} x^{\delta } )  \xi ^1-\tilde f x \xi ^1_{u}+\\
		&\quad ({\tilde \alpha} \tilde f x +{\tilde \beta} \tilde f x^2 -\sqrt{2} {\tilde \lambda} \tilde f x^{1+\delta })  \xi ^2_{u}-x \xi ^1_{\tilde t}+({\tilde \alpha}  x +{\tilde \beta}  x^2 -\sqrt{2} {\tilde \lambda} x^{1+\delta })  \xi ^2_{\tilde t}-\\
		&\quad ({\tilde \alpha}  x -{\tilde \beta}  x^2  +\sqrt{2} {\tilde \lambda} x^{1+\delta } ) \xi ^1_{x}+({\tilde \alpha}^2 x+2 {\tilde \beta} x^{1+2\gamma }   +2 {\tilde \alpha} {\tilde \beta} x^2 +{\tilde \beta} ^2 x^3 ) \xi ^2_{x}-\\
		&\ (2 \sqrt{2} {\tilde \alpha}  {\tilde \lambda} x^{1+\delta }  +2 \sqrt{2} {\tilde \beta}  {\tilde \lambda} x^{2+\delta } +2 \sqrt{2}\delta  {\tilde \lambda} x^{2 \gamma +\delta } - 2{\tilde \lambda} ^2 x^{1+2 \delta }  )\xi ^2_{x}+2 x^{1+2 \gamma } {\eta _1}_{xu}-\\
		&\quad 2 \tilde f x^{1+2 \gamma } \xi ^2_{xu}-x^{1+2 \gamma } \xi ^1_{xx}+( {\tilde \alpha} x^{1+2 \gamma } +{\tilde \beta} x^{2(1+ \gamma) }  -\sqrt{2}{\tilde \lambda} x^{1+2 \gamma +\delta }  ) \xi ^2_{xx}=0 . 
	\end{split}
\end{gather*}
Solving this system we get the equivalence algebra $\hat{\mathcal{L}}_{\widetilde{\mathcal{E}}}$.
\end{proof}
From the previous system of overdetermined equations three special cases emerge: $\gamma=1,\, \gamma=1/2$ and $\delta=2\gamma-1$. We proceed with the most generic case, $(\gamma-1)(\gamma-1/2)(\delta-2\gamma+1)\ne0$, and afterwards we  treat each special case separately.
\begin{lem}
	For $(\gamma-1)(\gamma-1/2)(\delta-2\gamma+1)\ne0$ there is an equivalence transformation that zeroes the parameters $\tilde\alpha,\tilde\beta,\tilde\lambda$:
	\begin{align*}
		\hat u &=\mathcal{A}_1 u,\\
		\hat f&= \frac{1}{4} x^{-2 \gamma -1} \mathcal{A}_1\left(4x^{2 \gamma +1} \tilde f+  \left(2 \tilde\beta  x^{2 \gamma +1}-4 \tilde\alpha  \gamma  x^{2 \gamma }-4 \tilde\beta  \gamma  x^{2 \gamma +1}+2 \tilde\lambda ^2 x^{2 \delta +1}+\right.\right.\\
			&\qquad\qquad\left.\left.\tilde\beta ^2 x^3+2 \tilde\alpha  \tilde\beta  x^2-2 \sqrt{2} \tilde\lambda  x^{\delta } \left((\delta -2 \gamma ) x^{2 \gamma }+x (\tilde\alpha +\tilde\beta  x)\right)+\tilde\alpha ^2 x\right)u\right),
	\end{align*}
where
$$
	\mathcal{A}_1 = \exp \left(\frac{1}{4} x^{1-2 \gamma } \left(\frac{2 \tilde\alpha }{1-2 \gamma }-\frac{2 \sqrt{2} \tilde\lambda}{\delta -2 \gamma  +1}x^{\delta }-\frac{\tilde\beta }{\gamma -1}x\right)\right).
$$	
\end{lem}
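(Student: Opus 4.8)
The plan is to read the stated map as a \emph{gauge} (multiplier) equivalence transformation $\hat u=\mathcal{A}_1(x)\,u$ that fixes $x$ and $\tilde t$ and is tailored to annihilate the first-order (drift) term of Eq.~\eqref{main:GBondPricing2}. Such a transformation sits inside $\hat{\mathcal{L}}_{\widetilde{\mathcal{E}}}$: it is generated by the subalgebra spanned by the operators \eqref{main:alphavector}, \eqref{main:betavector} and \eqref{main:lambdavector}, which carry the shifts $2\partial_{\tilde\alpha}$, $4\partial_{\tilde\beta}$ and $2\partial_{\tilde\lambda}$, so that a suitable choice of their arbitrary functions drives $\tilde\alpha,\tilde\beta,\tilde\lambda$ to zero. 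Rather than integrate and compose those three flows, I would verify the finite transformation directly by substitution; this is shorter and self-contained.

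First I would put $u=\mathcal{A}_1^{-1}\hat u$ with $\mathcal{A}_1=\mathcal{A}_1(x)$, so that $u_{\tilde t}=\mathcal{A}_1^{-1}\hat u_{\tilde t}$, while the two $x$-derivatives contribute the extra terms $(\mathcal{A}_1^{-1})_x\hat u$ in $u_x$ and $2(\mathcal{A}_1^{-1})_x\hat u_x+(\mathcal{A}_1^{-1})_{xx}\hat u$ in $u_{xx}$. Substituting into \eqref{main:GBondPricing2} and collecting the coefficient of $\hat u_x$ yields the single first-order linear condition
\[
\frac{d}{dx}\log\mathcal{A}_1=\frac{\tilde\alpha+\tilde\beta x-\sqrt{2}\,\tilde\lambda x^{\delta}}{2\,x^{2\gamma}}
=\tfrac12\bigl(\tilde\alpha\,x^{-2\gamma}+\tilde\beta\,x^{1-2\gamma}-\sqrt{2}\,\tilde\lambda\,x^{\delta-2\gamma}\bigr).
\]
Integrating term by term produces three power-function primitives whose denominators are $1-2\gamma$, $2-2\gamma$ and $\delta-2\gamma+1$; these are nonzero \emph{precisely} under the hypothesis $(\gamma-1)(\gamma-\tfrac12)(\delta-2\gamma+1)\ne0$, which is exactly why the three excluded values $\gamma=\tfrac12$, $\gamma=1$ and $\delta=2\gamma-1$ (where a logarithm would appear instead) must be treated separately. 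The resulting primitive is the exponent of the stated $\mathcal{A}_1$, the integration constant being absorbed into the scaling equivalence $\tilde f\,\partial_{\tilde f}+u\,\partial_u$.

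Once the $\hat u_x$ term is gone, division by $\mathcal{A}_1^{-1}$ leaves $\hat u_{\tilde t}+x^{2\gamma}\hat u_{xx}-\hat f=0$, that is, Eq.~\eqref{main:GBondPricing2} with $\hat\alpha=\hat\beta=\hat\lambda=0$; the surviving contributions $x^{2\gamma}(\mathcal{A}_1^{-1})_{xx}$ and $D\,(\mathcal{A}_1^{-1})_x$, re-expressed through $\hat u=\mathcal{A}_1 u$, give
\[
\hat f=\mathcal{A}_1\tilde f+\mathcal{A}_1\Bigl(\tfrac12 D'-\gamma\,x^{-1}D+\tfrac14\,x^{-2\gamma}D^{2}\Bigr)u,\qquad D:=\tilde\alpha+\tilde\beta x-\sqrt{2}\,\tilde\lambda x^{\delta}.
\]
The remaining work is purely algebraic: expand $D'=\tilde\beta-\sqrt{2}\,\delta\,\tilde\lambda x^{\delta-1}$ and $D^{2}$, multiply out, and check that $\tfrac12 D'-\gamma x^{-1}D+\tfrac14 x^{-2\gamma}D^{2}$ coincides with the bracket $\tfrac14 x^{-2\gamma-1}(\cdots)$ displayed in the statement. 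I expect this last bookkeeping---matching the nine monomials in $\tilde\alpha,\tilde\beta,\tilde\lambda$ against the stated polynomial---to be the only genuine obstacle: it is routine but error-prone by hand, and is exactly where the symbolic package \cite{Dimas2k8} carries out the verification.
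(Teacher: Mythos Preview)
Your argument is correct, and it is essentially the same approach taken in the paper but executed along a different path. The paper's proof exponentiates the three equivalence generators \eqref{main:betavector}, \eqref{main:alphavector}, \eqref{main:lambdavector} (with the arbitrary functions set to unity), reads off that the parameters transform additively as $\hat\beta=\tilde\beta+4\zeta_0$, $\hat\alpha=\tilde\alpha+2\zeta_1$, $\hat\lambda=\tilde\lambda+2\zeta_2$, and then chooses $\zeta_0=-\tilde\beta/4$, $\zeta_1=-\tilde\alpha/2$, $\zeta_2=-\tilde\lambda/2$; the displayed formulas for $\hat u$ and $\hat f$ come out of the remaining components of those flows. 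You acknowledge this route but instead plug the gauge $\hat u=\mathcal{A}_1(x)u$ straight into Eq.~\eqref{main:GBondPricing2}, solve the first-order ODE $(\log\mathcal{A}_1)'=D/(2x^{2\gamma})$ that annihilates the drift, and read off $\hat f$. Your method is more elementary and self-contained --- it does not rely on Theorem~2.1 --- and it makes the role of the hypothesis $(\gamma-1)(\gamma-\tfrac12)(\delta-2\gamma+1)\ne0$ fully transparent as the nonvanishing of the three integration denominators. The paper's method, on the other hand, guarantees \emph{a priori} that the map is an equivalence transformation, because it is obtained by composing one-parameter subgroups of $\hat{\mathcal{E}}_\mathcal{C}$; in your argument that fact is established \emph{a posteriori}, by checking that the transformed equation again lies in the class \eqref{main:GBondPricing2} with new parameter values $\hat\alpha=\hat\beta=\hat\lambda=0$. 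The residual ``nine-monomial'' bookkeeping you flag is exactly the computation $P=xD^2+2x^{2\gamma+1}D'-4\gamma x^{2\gamma}D$, and it matches the stated bracket term by term.
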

\begin{proof}
	By exponentiating the vectors  \eqref{main:betavector}, \eqref{main:alphavector} and \eqref{main:lambdavector} (after setting the arbitrary functions to unity) we have that $\hat\beta =\tilde \beta+4\zeta_0,\ \hat\alpha=\tilde\alpha+2\zeta_1,\ \hat\lambda=\tilde\lambda+2\zeta_2$, respectively. By setting $\zeta_0=-\tilde\beta/4,\ \zeta_1=-\tilde\alpha/2$ and $\zeta_2=-\tilde\lambda/2$ and substituting to the rest of the transformations we get the aforementioned result.
\end{proof}
Applying the above transformation to Eq.~\eqref{main:GBondPricing2} we get
\begin{equation}\label{main:GBondPricing3}
	\hat u_{\tilde t}+x^{2\gamma} \hat u_{xx}-\hat f(x,\hat u)=0.
\end{equation}
Once again we repeat the same process, now for Eq.~\eqref{main:GBondPricing3}, also assuming that $\xi^i=\xi^i(x,\tilde t,u), \eta_1=\eta_1(x,\tilde t,u)$. For brevity, we skip the intermediate steps and give directly the equivalence transformations in the following lemma.
\begin{lem}
	The continuous part of the equivalence group, $\hat{\mathcal{E}}_\mathcal{C}$, of Eq.~\eqref{main:GBondPricing3} consists of the transformations
	\begin{align*}
		&\bar x =  x^\frac{1}{\zeta_2^2},\ \bar t = \zeta_0+ \frac{1}{\zeta_2^4}\tilde t,\ \bar u = x^{\frac{1}{2}\left(\frac{1}{\zeta_2^2}-1\right)}(\zeta_1 \hat u +  \mathcal{F}(x)),\ \bar\gamma=1+\zeta_2^2(\gamma-1),\\
		 &\bar f = \frac{1}{4} x^{\frac{1}{2}\left(\frac{1}{\zeta_2^2}-5\right)}\left( (\zeta_2^4-1)x^{2\gamma}(\zeta_1 \hat u +  \mathcal{F}(x))+4\zeta_2^4x^2(\zeta_1 \hat f+
		 x^{2\gamma}\mathcal{F}^{\prime\prime}(x))\right),
	\end{align*}
	where $\zeta_0,\zeta_1,\zeta_2$ are arbitrary constants with $\zeta_1\zeta_2\ne0$ and $\mathcal{F}$ is an arbitrary real function.
\end{lem}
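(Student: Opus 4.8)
The plan is to mirror the computation behind Theorem~2.1, now applied to the reduced equation \eqref{main:GBondPricing3}, whose only arbitrary elements are the constant $\gamma$ and the source $\hat f(x,\hat u)$. One may set up the generator $\mathfrak{X}=\xi^1\partial_x+\xi^2\partial_{\tilde t}+\eta^1\partial_{\hat u}+\mu\partial_\gamma+\nu\partial_{\hat f}$ with $\xi^1,\xi^2,\eta^1$ depending only on $(x,\tilde t,\hat u)$ as stipulated, prolong it to second order modulo \eqref{main:GBondPricing3} together with the auxiliary constraints $\gamma_x=\gamma_{\tilde t}=\gamma_{\hat u}=0$ and $\hat f_{\tilde t}=0$, solve the resulting determining equations, and integrate the Lie equations. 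Because \eqref{main:GBondPricing3} carries no first-order term and is affine in its principal part, however, I would instead determine the finite class-preserving transformations directly and then confirm they generate the whole connected equivalence group; this is faster and the two routes agree.

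Concretely, I would posit $\bar x=\phi(x)$, $\bar t=\psi(\tilde t)$, $\bar u=S(x,\tilde t)\,\hat u+T(x,\tilde t)$, affine in $\hat u$ since the operator $\partial_{\tilde t}+x^{2\gamma}\partial_x^2$ is linear while $\hat f$ is free, with $\bar x$ a function of $x$ alone and $\bar t$ of $\tilde t$ alone so that the parabolic form $\bar u_{\bar t}+\bar x^{2\bar\gamma}\bar u_{\bar x\bar x}$ survives. Substituting into \eqref{main:GBondPricing3} and collecting the coefficients of $\hat u_{\tilde t}$, $\hat u_{xx}$, $\hat u_x$, $\hat u$ and the free term, three conditions drive the argument. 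Vanishing of the $\hat u_x$ coefficient (no first-order term is admissible and $\bar f$ cannot depend on $\hat u_x$) gives $S_x/S=\phi''/(2\phi')$, hence $S=\sqrt{\phi'}\,h(\tilde t)$. Requiring the ratio of the $\hat u_{xx}$ and $\hat u_{\tilde t}$ coefficients to equal $x^{2\gamma}$, that is $\phi^{2\bar\gamma}\psi'/(\phi')^2=x^{2\gamma}$, separates the variables, forcing $\psi'$ constant, so $\bar t=\zeta_0+k\tilde t$, and the power law $\phi(x)=x^{p}$ with $p^2=k$ and $\bar\gamma-1=(\gamma-1)/p$. Finally, demanding that $\bar f$ depend on $\bar x,\bar u$ only forces $h$ to be constant and $T=T(x)$, whence $S=\zeta_1 x^{(p-1)/2}$ and $T=x^{(p-1)/2}\mathcal F(x)$ with $\mathcal F$ arbitrary.

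Setting $p=1/\zeta_2^2$ then reproduces $\bar t=\zeta_0+\zeta_2^{-4}\tilde t$, $\bar\gamma=1+\zeta_2^2(\gamma-1)$, and $\bar u=x^{\frac12(1/\zeta_2^2-1)}(\zeta_1\hat u+\mathcal F(x))$; reading off the leftover $\hat u$- and free terms, after using $\phi^{2\bar\gamma}/(\phi')^2=x^{2\gamma}/k$ to eliminate $\bar\gamma$, yields precisely the stated $\bar f$, with the source transforming as $\tfrac{S}{k}\hat f$ and the homogeneous part producing the $\mathcal F$ and $\mathcal F''$ contributions. The conditions $\zeta_1\zeta_2\neq0$ arise from invertibility ($S\neq0$ and $\phi$ a genuine power).

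I expect the main obstacle to be the treatment of $x^{2\gamma}$ under the change of variables: its transform mixes an $x^{2\gamma-1}$ piece coming from $\phi$ with an $x^{2\gamma}\log x$ piece coming from the variation of $\gamma$, and it is the separation-of-variables step eliminating the spurious logarithmic and first-order terms that simultaneously pins down the power law $\phi=x^{p}$, the coupling $\bar\gamma-1=(\gamma-1)/p$ and the scaling $k=p^{2}$. Carrying the similarity weight $x^{(p-1)/2}$ through the second derivative to obtain the exact $\mathcal F''$ coefficient in $\bar f$ is the one genuinely error-prone calculation; I would perform it, as well as the final back-substitution check into \eqref{main:GBondPricing3}, with the SYM package.
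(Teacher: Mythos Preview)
Your approach is correct but genuinely different from the paper's. The paper does not give a separate argument for this lemma: it simply states that ``we repeat the same process'' as in Theorem~2.1, i.e.\ it writes down the infinitesimal generator on the extended space $(x,\tilde t,\hat u,\gamma,\hat f)$ with $\xi^i=\xi^i(x,\tilde t,\hat u)$, $\eta_1=\eta_1(x,\tilde t,\hat u)$, prolongs, solves the determining system, and then exponentiates the resulting equivalence algebra. You instead bypass the algebra entirely and compute the finite class-preserving point transformations by substitution, which is shorter here because the reduced equation \eqref{main:GBondPricing3} has no first-order term and only two arbitrary elements. What the paper's route buys is uniformity (the same machinery as Theorem~2.1, run through SYM) and an automatic guarantee that nothing is missed; what your route buys is transparency about \emph{why} the transformation is a pure power in $x$ and a rescaling in $\tilde t$.

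Two points deserve a line of justification if you write this up. First, your ansatz $\bar x=\phi(x)$ rather than $\phi(x,\tilde t)$: a $\tilde t$-dependence in $\phi$ feeds an extra $\hat u_x$ term into $\bar u_{\bar t}$ that cannot be cancelled against the second-order part for \emph{all} $\gamma$ simultaneously, since the needed $S_x/S$ would then carry $\tilde t$-dependence incompatible with $\bar f=\bar f(\bar x,\bar u)$. Second, and more important, the step ``forcing the power law $\phi(x)=x^p$'': for a \emph{fixed} $\gamma$ the relation $k\,\phi^{2\bar\gamma}=x^{2\gamma}(\phi')^2$ is a first-order ODE with a one-parameter family of non-power solutions; it is only because the equivalence transformation must work for \emph{every} $\gamma$ (with $\phi$ independent of $\gamma$) that differentiating in $\gamma$ gives $\bar\gamma'(\gamma)\log\phi=\log x$, whence $\phi=x^{p}$ with $p=1/\bar\gamma'$ constant. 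Making this explicit closes the only real gap; the rest of your derivation (the similarity weight $x^{(p-1)/2}$, $k=p^{2}$, $\bar\gamma=1+\zeta_2^2(\gamma-1)$, and the form of $\bar f$) is correct.
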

Immediately,  one can choose a specific equivalence transformation\footnote{At this point we have also included the discrete equivalence transformation $t\rightarrow -t, f\rightarrow -f$.} that zeroes $\bar\gamma$:
\begin{equation}\label{mainRes:equivalenceTransformation2}
	\begin{aligned}
		&\bar x =  x^{1-\gamma},\  \tau = -(\gamma-1)^2 \tilde t,\ \phi = x^{-\frac{\gamma}{2}}\hat u,\\ &\bar f =-\frac{1}{4(\gamma-1)^2} x^{-\frac{1}{2}(\gamma+4)}\left( \gamma(2-\gamma)x^{2\gamma}\hat u+4x^2 \hat f\right).
	\end{aligned}
\end{equation}
Via this transformation Eq.~\eqref{main:GBondPricing3} transforms into the equation
\begin{equation}\label{main:GBondPricing4}
	\phi_{\tau}-\phi_{\bar x\bar x}-\bar f(\bar x,\phi)=0,
\end{equation}
the heat equation with nonlinear source. Hence \changedText{we reduced} the problem of the complete group classification of Eq.~\eqref{eq:GBondPricing} --- for $(\gamma-1)(\gamma-1/2)(\delta-2\gamma+1)\ne0$ ---  \removedText{is reduced} to the group classification of the heat equation with nonlinear source, \eqref{eq:HeatNonLinearSource}. 

Before continuing with the group classification of Eq.~\eqref{eq:HeatNonLinearSource} we give, omitting the detailed calculations, the equivalence transformations for each one of the three special cases.

\subsection{The case $\gamma=1$}

\begin{lem}
	There is a equivalence transformation that zeroes the parameters $\tilde\alpha,\tilde\beta,\tilde\lambda$:
	\begin{align*}
		\hat u &= \mathcal{A}_2u,\\
		\hat f&= \frac{1}{4x^2}\mathcal{A}_2 \left(4  x^2 \tilde f+ \left(\tilde\alpha ^2+2 \tilde\lambda ^2 x^{2 \delta }-2 \sqrt{2} \tilde\lambda x^{\delta} (\tilde\alpha +(\tilde\beta +\delta -2)x)+\tilde\beta ^2 x^2-\right.\right.\\
		&\left.\left.\qquad\qquad\ 2 \tilde\beta  x^2+2 \tilde\alpha  \tilde\beta  x-4 \tilde\alpha  x\right)u\right),
	\end{align*}
	where
	$$
		\mathcal{A}_2= x^{\tilde\beta /2} \exp\left(-\frac{\tilde\alpha +\frac{\sqrt{2} \tilde\lambda }{\delta -1} x^{\delta}}{2 x}\right).
	$$	
\end{lem}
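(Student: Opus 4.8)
The plan is to mirror the proof of the general-case Lemma: the transformation zeroing $\tilde\alpha,\tilde\beta,\tilde\lambda$ is obtained by exponentiating the generators \eqref{main:betavector}, \eqref{main:alphavector} and \eqref{main:lambdavector} (with their arbitrary functions fixed to constants) and composing the resulting flows, choosing the parameters so that the shifts $\hat\beta=\tilde\beta+4\zeta_0$, $\hat\alpha=\tilde\alpha+2\zeta_1$, $\hat\lambda=\tilde\lambda+2\zeta_2$ all reach zero. The only genuinely new feature at $\gamma=1$ is that \eqref{main:betavector} is singular there: it carries the factor $1/(\gamma-1)$ and the power $x^{2(1-\gamma)}$ collapses to $1$. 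I would therefore re-solve the determining equations of the Theorem with $\gamma=1$ substituted from the outset; the $\tilde\beta$-shifting generator then picks up a $\log x$ in its $\partial_u$- and $\partial_{\tilde f}$-components (transparently, since $\frac{x^{2(1-\gamma)}-1}{\gamma-1}\to-2\log x$ as $\gamma\to1$), whereas the $\tilde\alpha$- and $\tilde\lambda$-generators stay regular (their denominators become $2\gamma-1\to1$ and $1-2\gamma+\delta\to\delta-1\neq0$). Exponentiating and composing, the product of the $u$-multipliers assembles into $\mathcal{A}_2$ — the logarithmic flow produces the $x^{\tilde\beta/2}$ prefactor, the other two the exponential — while the $\partial_{\tilde f}$-components assemble into the stated $\hat f$.

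However, I expect the cleanest and most easily verifiable route to be a direct check rather than the bookkeeping of composed flows. Since $x$ and $\tilde t$ are left fixed and $\tilde f$ is affine in $u$ under each generator, the sought transformation must have the form $\hat u=A(x)u$. Substituting this into the target equation \eqref{main:GBondPricing3} with $\gamma=1$ and using \eqref{main:GBondPricing2} to eliminate $u_{\tilde t}+x^2u_{xx}$ gives
\[
    \hat u_{\tilde t}+x^2\hat u_{xx}=A\tilde f+x^2A''u+\bigl(2x^2A'-A(\tilde\alpha+\tilde\beta x-\sqrt{2}\,\tilde\lambda x^{\delta})\bigr)u_x .
\]
Demanding that the first-order term in $u_x$ vanish forces the logarithmic-derivative condition $A'/A=(\tilde\alpha+\tilde\beta x-\sqrt{2}\,\tilde\lambda x^{\delta})/(2x^2)$, whose integration yields precisely $A=\mathcal{A}_2$; reading off the remaining terms gives $\hat f=A\tilde f+x^2A''u$, which is the asserted formula once $A''=A\bigl((A'/A)^2+(A'/A)'\bigr)$ is expanded and one checks term by term that $x^2(A'/A)^2+x^2(A'/A)'$ reproduces the polynomial coefficient of $u$ in the statement.

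The main obstacle in the first (exponentiation) route is purely organizational: tracking the $\partial_{\tilde f}$-components through the three composed flows and confirming that every cross-term ($\tilde\alpha\tilde\beta$, $\tilde\lambda^2x^{2\delta}$, the mixed $x^{\delta-1}$ contributions, and so on) reproduces the stated polynomial, together with verifying that the order of composition is immaterial. The second (direct) route avoids the $\gamma=1$ singularity altogether and reduces the whole lemma to integrating one first-order ODE and differentiating $A$ twice, so I would present that as the proof and merely remark that $\mathcal{A}_2$ is the $\gamma\to1$ limit of the general-case factor $\mathcal{A}_1$, with the $\tilde\beta$-power $x^{2-2\gamma}/(\gamma-1)$ replaced by the logarithm $\log x$.
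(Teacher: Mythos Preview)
Your proposal is correct on both routes. The paper itself omits the proof for this $\gamma=1$ lemma, merely stating that the detailed calculations parallel those of the general case; by implication it follows your Route~1, namely re-deriving the equivalence algebra with $\gamma=1$ substituted (so that the $\tilde\beta$-shift generator regularises to a $\log x$ term) and then exponentiating and composing the three flows exactly as in the proof of the generic-$\gamma$ lemma.

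Your Route~2 --- the direct verification via the ansatz $\hat u=A(x)u$ and the first-order ODE $A'/A=(\tilde\alpha+\tilde\beta x-\sqrt{2}\tilde\lambda x^{\delta})/(2x^2)$ --- is a genuinely different and more elementary argument. It sidesteps the equivalence-algebra machinery and the $\gamma=1$ singularity entirely, reducing the lemma to one quadrature and one second derivative; the trade-off is that it \emph{verifies} the transformation rather than \emph{deriving} it, so it does not by itself explain where $\mathcal{A}_2$ comes from or why the transformation is canonical within the equivalence group. Either route is acceptable; your remark that $\mathcal{A}_2$ is the $\gamma\to1$ limit of $\mathcal{A}_1$ (with $x^{2-2\gamma}/(\gamma-1)$ degenerating to $-2\log x$) nicely bridges the two.
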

Applying this transformation we arrive again to Eq.~\eqref{main:GBondPricing3}, now for $\gamma=1$. For this case an additional equivalence transformation\footnotemark[\value{footnote}] exists,
\begin{equation*}
	\bar x =  \log x,\ \tau=-\tilde t,\ \phi = x^{-1/2}\hat u,\ \bar f = -\frac{1}{\sqrt{x}}(\frac{1}{4}\hat u+\hat f) , 
\end{equation*}
 that turns it into Eq.~\eqref{main:GBondPricing4}. 
 
\subsection{The case $\gamma=1/2$}

\begin{lem}
	There is a equivalence transformation that zeroes the parameters $\tilde\alpha,\tilde\beta,\tilde\lambda$:
	\begin{align*}
		\hat u &= \mathcal{A}_3u,\\
		\hat f&= \frac{1}{4x}\mathcal{A}_3 \left(4 x\tilde f +(\tilde\alpha -2) \tilde\alpha  u+u \left(2 \tilde\lambda ^2 x^{2 \delta }-2 \sqrt{2} \tilde\lambda   x^{\delta } (\tilde\alpha +\delta -1+\tilde\beta  x)+\right.\right.\\
		&\qquad\qquad\left.\left.\tilde\beta  x (2 \tilde\alpha +\tilde\beta  x)\right)\right),
	\end{align*}
	where
	$$
		\mathcal{A}_3= x^{\tilde\alpha /2} \exp\left(\frac{\tilde\beta }{2}x-\frac{\tilde\lambda  }{\sqrt{2} \delta } x^{\delta }\right).
	$$	
\end{lem}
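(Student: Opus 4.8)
The plan is to proceed exactly as in the proof of the Lemma for the generic case, namely to seek a point equivalence transformation that fixes the independent variables, $\bar x=x$ and $\bar t=\tilde t$, and acts on the dependent variable by a multiplier depending on $x$ alone, $\hat u=\mathcal{A}_3(x)\,u$. Since $\gamma=1/2$ forces $x^{2\gamma}=x$, Eq.~\eqref{main:GBondPricing2} reads $u_{\tilde t}+x\,u_{xx}+(\tilde\alpha+\tilde\beta x-\sqrt 2\tilde\lambda x^\delta)u_x-\tilde f=0$, and the one requirement I would impose on $\mathcal{A}_3$ is that the transformed equation carry no first-order term $\hat u_x$; the equation then automatically takes the form \eqref{main:GBondPricing3} with $\gamma=1/2$, and $\hat f$ is read off from the zeroth-order part.

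First I would put $u=\mathcal{A}_3^{-1}\hat u$ and set $\psi=\log\mathcal{A}_3$, so that $u_x=\mathcal{A}_3^{-1}(\hat u_x-\psi'\hat u)$ and $u_{xx}=\mathcal{A}_3^{-1}(\hat u_{xx}-2\psi'\hat u_x+((\psi')^2-\psi'')\hat u)$. Multiplying the equation by $\mathcal{A}_3$ and collecting the coefficient of $\hat u_x$ produces $-2x\psi'+(\tilde\alpha+\tilde\beta x-\sqrt 2\tilde\lambda x^\delta)$, so the first-order term vanishes precisely when $\psi$ solves the linear ODE $2x\psi'=\tilde\alpha+\tilde\beta x-\sqrt 2\tilde\lambda x^\delta$. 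Integrating gives $\psi=\tfrac{\tilde\alpha}{2}\log x+\tfrac{\tilde\beta}{2}x-\tfrac{\tilde\lambda}{\sqrt 2\,\delta}x^\delta$ up to an inessential additive constant, that is, $\mathcal{A}_3=x^{\tilde\alpha/2}\exp(\tfrac{\tilde\beta}{2}x-\tfrac{\tilde\lambda}{\sqrt 2\,\delta}x^\delta)$ as stated. I would emphasize that the factor $x^{\tilde\alpha/2}$ --- in place of a pure exponential --- appears for the elementary reason that the $\tilde\alpha$-contribution to $\psi'$ equals $\tfrac{\tilde\alpha}{2}x^{-2\gamma}$, whose primitive $\int x^{-2\gamma}\,dx$ is logarithmic exactly when $2\gamma=1$; this is the analytic origin of $\gamma=1/2$ being treated separately.

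With $\mathcal{A}_3$ fixed the transformed equation is $\hat u_{\tilde t}+x\hat u_{xx}-\hat f=0$ with $\hat f=\mathcal{A}_3\big(\tilde f+[\,x((\psi')^2-\psi'')-(\tilde\alpha+\tilde\beta x-\sqrt 2\tilde\lambda x^\delta)\psi'\,]\,u\big)$. Using the first-order relation in the form $(\tilde\alpha+\tilde\beta x-\sqrt 2\tilde\lambda x^\delta)\psi'=2x(\psi')^2$ collapses the bracket to $x((\psi')^2+\psi'')$. Finally I would insert $\psi'=\tfrac{\tilde\alpha}{2x}+\tfrac{\tilde\beta}{2}-\tfrac{\sqrt 2\tilde\lambda}{2}x^{\delta-1}$ together with $\psi''=-\tfrac{\tilde\alpha}{2x^2}-\tfrac{\sqrt 2\tilde\lambda(\delta-1)}{2}x^{\delta-2}$, expand $x((\psi')^2+\psi'')$, and collect powers of $x$ (the monomials $x^{-1},x^{0},x,x^{\delta-1},x^{\delta},x^{2\delta-1}$ occur); matching term by term reproduces the polynomial-in-$x$ coefficient of $u$ displayed in the statement.

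The only genuine labor is this last collection step, which is routine but error-prone, and there is no conceptual obstacle. As an independent check I would confront the result with the generic Lemma by letting $\gamma\to 1/2$ in $\mathcal{A}_1$: the $\tilde\beta$- and $\tilde\lambda$-terms pass to the limit directly, their denominators $\gamma-1$ and $\delta-2\gamma+1$ being nonzero at $\gamma=1/2$, whereas the $\tilde\alpha$-term $\tfrac{\tilde\alpha}{2}\tfrac{x^{1-2\gamma}}{1-2\gamma}$ develops a $\tfrac1{1-2\gamma}$ pole whose finite part is $\tfrac{\tilde\alpha}{2}\log x$; the divergent constant is a mere overall scaling of $u$, i.e. an application of the scaling generator $\mathcal{F}_2$, and may be dropped, leaving exactly $x^{\tilde\alpha/2}$. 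Should one instead prefer the generator-exponentiation route used for the generic Lemma, the obstacle migrates to regularizing the $\alpha$-generator \eqref{main:alphavector}, which is singular at $\gamma=1/2$ because $2\gamma-1$ sits in its denominators: one subtracts the appropriate multiple of $\mathcal{F}_2$ to cancel the pole before exponentiating --- the surviving $\log x$ again yielding the $x^{\tilde\alpha/2}$ factor --- and then sets the group parameters to $\zeta_0=-\tilde\beta/4,\ \zeta_1=-\tilde\alpha/2,\ \zeta_2=-\tilde\lambda/2$, just as in the generic case.
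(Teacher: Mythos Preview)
Your argument is correct, and in fact more direct than what the paper does. The paper does not display a proof for this particular lemma (it is one of the three special cases whose details are explicitly ``omitted''), but its method---shown only for the generic analogue---is to exponentiate the equivalence-algebra generators \eqref{main:betavector}--\eqref{main:lambdavector} and then set the group parameters so as to annihilate $\tilde\alpha,\tilde\beta,\tilde\lambda$. You bypass the equivalence algebra altogether: you posit $\hat u=\mathcal A_3(x)\,u$, fix $\mathcal A_3$ by demanding that the transformed first-order coefficient vanish, and read off $\hat f$. This is shorter and, as you note, avoids having to regularize the $\alpha$-generator \eqref{main:alphavector}, which is singular at $\gamma=1/2$; the paper's route, by contrast, keeps the connection with the equivalence group explicit and makes clear \emph{a priori} that such a transformation must exist.

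One minor bookkeeping slip: in your intermediate formula $\hat f=\mathcal A_3\bigl(\tilde f+[\,x((\psi')^2-\psi'')-P\psi'\,]\,u\bigr)$ the sign in front of the bracket should be $-$, since if $C$ denotes the coefficient of $\hat u$ in the transformed equation one has $\hat f=\mathcal A_3\tilde f-C\hat u$; and the bracket itself, after substituting $P\psi'=2x(\psi')^2$, collapses to $-x((\psi')^2+\psi'')$ rather than $+x((\psi')^2+\psi'')$. The two sign errors cancel, so your final expression $\hat f=\mathcal A_3\bigl(\tilde f+x((\psi')^2+\psi'')\,u\bigr)$ and the subsequent term-by-term expansion are correct and match the statement exactly.
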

Similarly, utilizing this transformation we reach Eq.~\eqref{main:GBondPricing3}, now for $\gamma=1/2$. By using now the equivalence transformation \eqref{mainRes:equivalenceTransformation2} for $\gamma=1/2$, 
\begin{equation*}
		\bar x =  \sqrt{x},\ \tau =  -\tilde t/4,\ \phi =  x^{-1/4}\hat u,\ \bar f = - x^{-\frac{5}{4}}\left( \frac{3}{4}\hat u+4x \hat f\right),
\end{equation*}
we again arrive at Eq.~\eqref{main:GBondPricing4}. 

\subsection{The case $\delta=2\gamma-1\ne0,1$}

\begin{lem}
	There is a equivalence transformation that zeroes the parameters $\tilde \alpha,\tilde \beta,\tilde\lambda$:
	\begin{align*}
		\hat u &= \mathcal{A}_4u,\\
		\hat f&= \frac{1}{4} x^{-2 \gamma -2}\mathcal{A}_4\left(4 x^{2 \gamma +2}\tilde f+\left(2 \tilde\lambda  \left(\tilde\lambda +\sqrt{2}\right) x^{4 \gamma }+x^2 (\tilde\alpha +\tilde\beta  x)^2-\right.\right.\\
			&\left.\left.\qquad2 x^{2 \gamma +1} \left(\tilde\alpha  \left(2 \gamma +\sqrt{2} \tilde\lambda \right)+\tilde\beta  \left(2 \gamma +\sqrt{2} \tilde\lambda -1\right)x\right)\right)u\right),
	\end{align*}
	where
	$$
		\mathcal{A}_4= x^{-\frac{\tilde\lambda }{\sqrt{2}}} \exp \left(-\frac{1}{4} x^{1-2 \gamma } \left(\frac{2 \tilde\alpha }{2 \gamma -1}+\frac{\tilde\beta  x}{\gamma -1}\right)\right).
	$$	
\end{lem}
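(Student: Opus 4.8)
The plan is to follow the same strategy as in the proof of the generic-case lemma, namely to exponentiate the relevant equivalence generators and to choose their group parameters so as to translate $\tilde\alpha$, $\tilde\beta$, $\tilde\lambda$ simultaneously to zero. The $\tilde\beta$- and $\tilde\alpha$-translations are supplied by the vectors \eqref{main:betavector} and \eqref{main:alphavector}: their coefficients carry only the factors $1/(\gamma-1)$ and $1/(2\gamma-1)$, both of which are finite under the standing hypothesis $\delta=2\gamma-1\neq0,1$, so these generators survive unchanged. Exponentiating them (with the arbitrary functions set to unity) yields $\hat\beta=\tilde\beta+4\zeta_0$ and $\hat\alpha=\tilde\alpha+2\zeta_1$, with induced multipliers on $u$ equal to $\exp(\zeta_0\,x^{2(1-\gamma)}/(\gamma-1))$ and $\exp(\zeta_1\,x^{1-2\gamma}/(2\gamma-1))$; setting $\zeta_0=-\tilde\beta/4$ and $\zeta_1=-\tilde\alpha/2$ then reproduces precisely the two exponential factors appearing inside $\mathcal{A}_4$.

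The crux is the $\tilde\lambda$-translation, for which the generic vector \eqref{main:lambdavector} is unavailable: all of its coefficients carry the factor $1/(1-2\gamma+\delta)$, and $1-2\gamma+\delta$ is exactly the quantity that vanishes when $\delta=2\gamma-1$. I would therefore re-solve the determining equations under the standing assumption $\delta=2\gamma-1$ to recover the $\tilde\lambda$-generator appropriate to this branch. In that degenerate situation the power $x^{1-2\gamma+\delta}$ collapses to $x^0$ and the $u\partial_u$ coefficient passes from a power of $x$ to the logarithmic term $\sqrt{2}\,(\log x)\,u\partial_u$, while the translational piece $2\partial_{\tilde\lambda}$ persists. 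Exponentiating this generator gives $\hat\lambda=\tilde\lambda+2\zeta_2$ together with the multiplier $\exp(\sqrt{2}\,\zeta_2\log x)=x^{\sqrt{2}\,\zeta_2}$, and the choice $\zeta_2=-\tilde\lambda/2$ both zeroes $\tilde\lambda$ and yields the factor $x^{-\tilde\lambda/\sqrt{2}}$ --- exactly the remaining, non-exponential factor in $\mathcal{A}_4$.

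Having fixed the three group parameters, it remains to compose the three one-parameter flows and read off the joint action on $u$ and on $f$. Since the $u\partial_u$ coefficients of all three generators depend on $x$ and $\gamma$ alone, and not on the parameters being translated, the flows act multiplicatively on $u$ and their product is exactly $\mathcal{A}_4\,u$ irrespective of the order of composition; this gives $\hat u=\mathcal{A}_4 u$ as stated. The action on $f$ is assembled from the $\partial_{\tilde f}$ components of the three generators, whose coefficients do involve $\tilde\alpha,\tilde\beta,\tilde\lambda$, so one integrates the induced flow and substitutes $\zeta_0,\zeta_1,\zeta_2$; simplifying with $\delta=2\gamma-1$ (so that $x^{2(1+\delta)}=x^{4\gamma}$ and $x^{\delta}=x^{2\gamma-1}$) collapses the many monomials into the compact closed form for $\hat f$. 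I expect the main obstacle to be precisely the degenerate $\tilde\lambda$-generator: one must confirm that re-solving the determining equations at $\delta=2\gamma-1$ produces the logarithmic $u\partial_u$ term, and then carry out the lengthy but mechanical reduction of the $\partial_{\tilde f}$-contributions into the stated expression.
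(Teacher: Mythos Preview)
Your proposal is correct and follows precisely the approach the paper uses (and explicitly carries out in the generic case): exponentiate the $\tilde\alpha$-, $\tilde\beta$-, and $\tilde\lambda$-translation generators with the arbitrary functions set to unity, choose the group parameters $\zeta_0=-\tilde\beta/4$, $\zeta_1=-\tilde\alpha/2$, $\zeta_2=-\tilde\lambda/2$, and read off the composite action on $u$ and $\tilde f$. You have also correctly isolated the one genuinely new feature of this branch --- the degeneration of the generic $\tilde\lambda$-generator \eqref{main:lambdavector} at $1-2\gamma+\delta=0$ into a logarithmic $u\partial_u$ term, which is exactly what produces the power $x^{-\tilde\lambda/\sqrt{2}}$ in $\mathcal{A}_4$ in place of an exponential factor. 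The paper omits the detailed calculation for this special case, but your outline is the intended one and the remaining verification of the $\hat f$-formula is, as you say, mechanical.
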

Likewise, using this transformation we arrive to Eq.~\eqref{main:GBondPricing3} and by using again the equivalence transformation \eqref{mainRes:equivalenceTransformation2} we reach once more the Eq.~\eqref{main:GBondPricing4}. 

\subsection{Group classification of the heat equation with nonlinear source}

As  we shown in the previous sections Eq.~\eqref{eq:GBondPricing} is linked via a series of point transformations to the heat equation with nonlinear source \eqref{main:GBondPricing4}. Hence, the problem of the group classification of \eqref{eq:GBondPricing} is reduced to obtaining the group classification of  \eqref{main:GBondPricing4}. This classification was done in \cite{BoDi2k14b} and is  included, as a subset, in the work of Zhdanov \emph{et al.} \cite{ZhdaLa99}. In their work, the group classification of the heat conductivity equation with a nonlinear source
\begin{equation*}
	u_t=u_{xx}+F(t,x,u,u_x)
\end{equation*}
is performed taking advantage  of the fact that the abstract Lie algebras of dimensions up to  five are already classified. 

\section{Examples of invariant solutions}

Having accessible the complete group classification for Eq.~\eqref{main:GBondPricing4} ---  and consequently for Eq.~\eqref{eq:GBondPricing} --- we can look for invariant solutions under the terminal condition \eqref{cond:terminal} and the barrier option condition \eqref{cond:barrier}: Given a specific algebra from the classification in \cite{BoDi2k14b} the appropriate subalgebra (and  the functions $H(t), R(t)$ for the barrier option problem) admitted by each problem are determined utilizing the conditions \eqref{cond:terminal}, \eqref{cond:barrier} adapted now for Eq.~\eqref{main:GBondPricing4}. 

Namely,
\begin{equation}\label{main:terminala}
	\left.\mathfrak{X}(\tau-T^\prime)\right\rvert_{\tau=T^\prime}\equiv0,
\end{equation}
\begin{equation}\label{main:terminalb}
	\left.\mathfrak{X}\left(\phi-\Phi(X^{-1}(\bar x),1)\right)\right\rvert_{\tau=T^\prime,\, \phi=\Phi(X^{-1}(\bar x),1)}\equiv0,
\end{equation}
where $T^\prime = \Psi(T)$, and
\begin{equation}\label{main:barriera}
	\left.\mathfrak{X}\left(\bar x-X(H^*(\tau))\right)\right\rvert_{\bar x=X(H^*(\tau))}\equiv0,
\end{equation}
\begin{equation}\label{main:barrierb}
	\left.\mathfrak{X}\left(\phi-\Phi(H^*(\tau) ,R^*(\tau) )\right)\right\rvert_{\phi=\Phi(H^*(\tau) ,R^*(\tau) )}\equiv0,
\end{equation}
where $\mathfrak{X}$ denotes the Lie algebra chosen and  $H^*(\tau) = H(\Psi^{-1}(\tau))$, $R^*(\tau) = R(\Psi^{-1}(\tau))$ with $\bar x =X(x),\ \tau = \Psi(t),$ $\phi = \Phi(x,u)$ denote the point transformation in each case.
Finally, by using the found subalgebra \changedText{we construct} similarity solutions \removedText{are constructed} as per usual. 

In \cite{BoDi2k14b} we \removedText{have} \changedText{illustrated} in detail the process, here we avoid exposing the cumbersome calculations and  present a few illustrative examples that by no means exhaust all the possible solutions that can be found using the classification of Eq.~\eqref{main:GBondPricing4}.

\subsection{The terminal condition}

Due to the restrictions imposed to the admitted symmetries by this condition the chances of obtaining an nontrivial analytic solution, a solution depending explicitly on both variables $\bar x,\tau$,  using less than a \removedText{four}\changedText{three}-dimensional algebra are slim. Hence and the fact that the following examples are restricted to the \changedText{higher-dimensional} algebras only.

\subsubsection{$(\gamma-1)(\gamma-1/2)(\delta-2\gamma+1)\ne0$}

By using the Lie algebra $A^4_4$,
$$
	 \Span\left(\partial_{\tau},  e^{-\frac{2\beta}{\rho^2}\tau } \phi\partial _\phi, \frac{2\beta}{\rho^2} \partial _{\bar x}+B \phi\partial _\phi,\ 2 e^{-\frac{2\beta}{\rho^2} \tau} \partial _{\bar x}+e^{-\frac{2\beta}{\rho^2} \tau} (\frac{2\beta}{\rho^2} \bar x+2 B \tau)\phi \partial _\phi\right),
$$
with $A=-2\beta/\rho^2$, we get the similarity solution
\begin{multline*}
	u(x,t)=\exp\left(\frac{1}{8 \beta ^3 \rho ^2}\left(\rho ^2 \left(\beta ^2 B^2 \rho ^6 \omega ^2-2 B^2 \rho ^6 \left(e^{\beta  \omega }-1\right)+2 \beta  B^2 \rho ^6 \omega -\right.\right.\right.\\
	\left.\left.\left.4 \beta ^3 \left(e^{\beta  \omega }+B \rho ^2 x \omega -1\right)\right)-4 \alpha ^2 \beta ^2 \left(e^{\beta  \omega }-1\right)-4 \alpha  \beta  B \rho ^4 \left(\beta  \omega -e^{\beta  \omega }+1\right)\right)\right),
\end{multline*}
where $\omega=t-T$  for $f(x,u) =-\frac{1}{2 \rho ^2} u \left(\alpha ^2+\beta  \rho ^2+B \rho ^4 x-2 \beta  \rho ^2 \log\lvert u\rvert\right)$ and $\gamma=\lambda=0$.

\subsubsection{$\gamma=1$}

By using the Lie algebra $A^3_{3,8}$,
$$
 	\Span\left(\partial_{\tau},  2 \bar x\partial _{\bar x}+4 \tau\partial_{\tau}, 4 \bar x\tau\partial _{\bar x}+4\tau^2\partial_{\tau}-\bar x^2 \phi\partial _\phi\right),
$$
with $A=1/2,\ \Gamma=0$, we get the similarity solution
$$
	u(x,t)=e^{\frac{\log ^2x}{2 \rho ^2 (t-T)}},
$$
for $f(x,u) =\frac{\rho ^2 u \log \lvert u\rvert}{\log ^2x}$ and $\alpha = \lambda\rho,\beta=\frac{\rho^2}{2},\delta=0$.

\subsubsection{$\gamma=1/2$}

Again, using the Lie algebra $A^3_{3,8}$ ,
\begin{multline*}
 	\Span\left(\partial_{\tau},  2 \bar x\partial _{\bar x}+4 \tau\partial_{\tau}+\frac{(4\alpha-\rho^2)\phi}{\rho^2}\partial_\phi,\right. \\
		\left.4 \bar x\tau\partial _{\bar x}+4\tau^2\partial_{\tau}-\left(\bar x^2 + \frac{2(\rho^2-4\alpha)t}{\rho^2}\right)\phi\partial_\phi\right),
\end{multline*}
with $A=2\alpha/\rho^2,\ B=\Gamma=0$, we get the similarity solution
$$
	u(x,t)=e^{-\frac{2 x (t-T)}{\rho ^2  T t}},
$$
for $f(x,u) =\frac{\rho ^2 u \log\lvert u\rvert}{4 x}-\frac{2 x u }{\rho ^2 T^2}$ and $\alpha=\frac{\rho^2}{4},\beta=\frac{2}{T},\lambda=0$.

\subsubsection{$\delta=2\gamma -1$}

By using the Lie algebra $A^4_4$ ,
$$
	 \Span\left(\partial_{\tau},  e^{A\tau } \phi\partial _\phi, A \partial _{\bar x}-B \phi\partial _\phi,\ 2 e^{A \tau} \partial _{\bar x}+e^{A \tau} (2 B \tau - A\bar x)\phi \partial _\phi\right),
$$
we get the similarity solution
\begin{multline*}
	u(x,t)=\exp \left(\frac{B}{A^3} \left(B e^{A (\gamma -1)^2 \rho ^2 (T-t)}-A^2 x^{1-\gamma }+\right.\right.\\
	 \left.\left.A x^{-\gamma } e^{\frac{1}{2} A (\gamma -1)^2 \rho ^2 (T-t)} \left(A x+B (\gamma -1)^2 \rho ^2 (t-T) x^{\gamma }\right)-B\right)\right),
\end{multline*}
for $f(x,u) =-\frac{1}{2} (\gamma -1)^2 \rho ^2 u \left(A \log\lvert u\rvert +B x^{1-\gamma}\right)$ and $\alpha,\beta=0,\ \lambda=-\frac{\gamma\rho}{2}$.

\subsection{The condition for the barrier option}

Contrary to the terminal condition, for the barrier option --- due to the two arbitrary functions $H(t), R(t)$ --- any choice of  Lie Algebras from the classification of \eqref{main:GBondPricing4} can be utilized in order to obtain nontrivial similarity solutions. Furthermore, the majority of the symmetries admit the form of the barrier function $H(t)$, \eqref{H0}, \changedText{found}\removedText{used} in the literature. \changedText{Another indication that indeed symmetries have the ability to highlight the significance of specific choices}\removedText{A strong indication that this choice for the barrier function has a significant physical --- in our case financial --- meaning}.

\subsubsection{$(\gamma-1)(\gamma-1/2)(\delta-2\gamma+1)\ne0$}

By using the Lie algebra $A^1_{2,2}$,
$$
	 \Span\left(\partial_{\tau}, e^{\tau } x\partial _x+2e^{\tau}\partial _{\tau} -  \frac{e^{\tau}}{4}  \left(x^2 -2 \right)\phi\partial _\phi\right),
$$
with $A=-1/2$, we get the similarity solution
$$
	u(x,t)=\frac{\sqrt{x}}{\mathcal A} \log\lvert\frac{x^{8 (\gamma -1)} e^{-2 (\gamma -1)^2 \rho ^2 t}}{256 \mathbf{c}}+\mathbf{c}x^{8(1- \gamma) } e^{2 (\gamma -1)^2 \rho ^2 t}\rvert,
$$
for 
\begin{multline*}
 	f(x,u) =\frac{x^{-2 \gamma -\frac{5}{2}}}{32 \rho ^2} \times 	\left(\frac{16 (\gamma -1)^2 \rho ^4 x^{4 \gamma +1}}{\mathcal A} \exp \left(-\frac{2 \mathcal A u}{\sqrt{x}}\right)+\right.\\
	u \left(32 \alpha  \gamma  \rho ^2 x^{2 \gamma +\frac{3}{2}}+32 \alpha  \lambda  \rho  x^{\delta +\frac{5}{2}}-16 \alpha ^2 x^{5/2}-32 \alpha  \beta  x^{7/2}++32 \beta  \lambda  \rho  x^{\delta +\frac{7}{2}}+\right.\\
	 x^{9/2} \left((\gamma -1)^2 \rho ^4-16 \beta ^2\right)-8 \rho ^2 x^{2 \gamma +\frac{5}{2}} \left(\beta  (2-4 \gamma )+(\gamma -1)^2 \rho ^2\right)-\\
	\left.\left.16 \lambda  \rho ^3 (2 \gamma -\delta ) x^{2 \gamma +\delta +\frac{3}{2}}-4 \rho ^4 x^{4 \gamma +\frac{1}{2}}-16 \lambda ^2 \rho ^2 x^{2 \delta +\frac{5}{2}}\right)\right),
\end{multline*}
where 
$$
	\mathcal A=\exp\left(\frac{1}{8} x^{1-2 \gamma } \left(\frac{4 \left(\frac{2 \alpha }{1-2 \gamma }-\frac{2 \lambda  \rho  x^{\delta }}{-2 \gamma +\delta +1}-\frac{\beta  x}{\gamma -1}\right)}{\rho ^2}+x\right)\right), 
$$ 
$\mathbf{c}\ne0$ is a constant and $ H(t) = b K e^{\frac{1}{4} (\gamma -1) \rho ^2 (t-T)}$.

\subsubsection{$\gamma=1$}

By using the Lie algebra $A^9_{3,5}$,
\begin{multline*}
 	\Span\Biggl(\partial_{\tau}, \partial_x+\frac{a}{\rho^2}\phi\partial_\phi,\\
	 \left(x-\frac{2a}{\rho^2} \tau\right)\partial_x+2\tau\partial_{\tau}-\left(\left(2-\frac{a}{\rho^2} x+\frac{2 a^2}{\rho^4}\tau\right) \phi-\frac{a^2}{\rho^4}e^{-B x} \right)\partial_\phi  \Biggr),
\end{multline*}
with $B=-a/\rho^2$, we get the similarity solution
$$
	u(x,t)=\frac{x^{\frac{a-\beta }{\rho ^2}+\frac{1}{2}} \left(a^4 t^2+a^2  (\log x-2 a t)\log x+12 \rho ^4\right) e^{\frac{\alpha +\frac{\lambda  \rho  x^{\delta }}{\delta -1}}{\rho ^2 x}}}{2 \rho ^4 (\log x-a t)^2},
$$
for 
\begin{multline*}
	f(x,u) =\frac{1}{8} \left(\frac{a^4 x^{\frac{a}{\rho ^2}-\frac{\beta }{\rho ^2}+\frac{1}{2}} e^{\frac{\alpha  (\delta -1)+\lambda  \rho  x^{\delta }}{(\delta -1) \rho ^2 x}}}{\rho ^6}+4 \rho ^2 u^2 x^{-\frac{a}{\rho ^2}+\frac{\beta }{\rho ^2}-\frac{1}{2}} e^{\frac{-\alpha  \delta +\alpha -\lambda  \rho  x^{\delta }}{(\delta -1) \rho ^2 x}}-\right.\\
	\frac{u }{\rho ^2 x^2}\left(4 \alpha ^2-4 \lambda  \rho  x^{\delta +1} \left(2 \beta +(\delta -2) \rho ^2\right)+4 \lambda ^2 \rho ^2 x^{2 \delta }-\right.\\
	\left.\left.8 \alpha  \left(\lambda  \rho  x^{\delta }+x \left(\rho ^2-\beta \right)\right)+x^2 \left(\rho ^2-2 \beta \right)^2\right)\right)
\end{multline*}
and $H(t) = b K e^{a (t-T)}, a<0$.

\subsubsection{$\gamma=1/2$}

By using the Lie algebra $A^2_{3,8}$,
\begin{multline*}
 	\Span\left(\partial_{\tau},  \frac{4 a}{\rho ^2} e^{- \frac{8 a}{\rho ^2} \tau} x\partial _x- e^{-\frac{8 a}{\rho ^2} \tau }\partial_{\tau}+\right.\\
			\frac{2 a}{\rho ^2} e^{-\frac{8 a}{\rho ^2}\tau}  \left(2\left(\frac{2 a}{\rho ^2}+ \Gamma\right) \Delta e^{-\frac{1}{2} \Gamma x^2 } x^{\frac{5}{2}} +\left(\frac{4 a}{\rho ^2} x^2+1\right)\phi\right)\partial _\phi,\frac{4 a}{\rho ^2} e^{\frac{8 a}{\rho ^2} \tau} x\partial _x+\\
			\left. e^{\frac{8 a}{\rho ^2}\tau }\partial_{\tau}-
			\frac{2 a}{\rho ^2} e^{\frac{8 a}{\rho ^2}\tau} \left(2\left(\frac{2 a}{\rho ^2}-\Gamma\right) \Delta e^{-\frac{1}{2} \Gamma x^2} x^{\frac{5}{2}} + \left(\frac{4 a}{\rho ^2} x^2-1\right)\phi\right)\partial _\phi\right),
\end{multline*}
with $A=1,\ B=-\frac{16 a^2}{\rho ^4}$, we get the similarity solution
\begin{multline*}
	u(x,t)=x^{\frac{1}{4}-\frac{\alpha }{\rho ^2}} e^{\frac{\lambda  \rho  x^{\delta }-\beta  \delta  x}{\delta  \rho ^2}}\times\\
	 \left(x^{\frac{1}{8} \left(4 a t-\frac{\mathbf{c} \rho ^2}{a}+2\right)} \exp \biggl(\frac{1}{64} \biggl(\frac{\mathbf{c}^2 \rho ^4}{a^2}+16 a^2 t^2-\frac{64 a x}{\rho ^2}-16 \Gamma -8 \mathbf{c} \rho ^2 t+\right.\\
	 \left.16 \log ^2x+28\biggr)\biggr)-\sqrt[4]{x} \Delta e^{-\frac{1}{2} \Gamma x}\right),
\end{multline*}
where $\mathbf{c}$ is a constant, for 
\begin{multline*}
	f(x,u) =\frac{1}{32 \rho ^2}\left(16 a^2 \left(u x+\Delta x^{\frac{3}{2}-\frac{\alpha }{\rho ^2}} e^{\frac{\lambda  x^{\delta }}{\delta  \rho }-\frac{\beta  x}{\rho ^2}-\frac{\Gamma x}{2}}\right)-32 \alpha  \beta  u-8 \Gamma \rho ^4 u+\right.\\
	32 \alpha  \lambda  \rho  u x^{\delta -1}-16 \lambda ^2 \rho ^2 u x^{2 \delta -1}-16 \lambda  \rho ^3 u x^{\delta -1}+16 \delta  \lambda  \rho ^3 u x^{\delta -1}+32 \beta  \lambda  \rho  u x^{\delta }+\\
	16 \rho ^4 \left(\frac{u}{x}+\Delta x^{-\frac{\alpha }{\rho ^2}-\frac{1}{2}} e^{\frac{\lambda  x^{\delta }}{\delta  \rho }-\frac{\beta  x}{\rho ^2}-\frac{\Gamma x}{2}}\right) \log \lvert  x^{\frac{\alpha }{\rho ^2}-\frac{1}{2}} e^{-\frac{\lambda  x^{\delta }}{\delta  \rho }+\frac{\beta  x}{\rho ^2}+\frac{\Gamma x}{2}}u+\Delta\rvert-\frac{16 \alpha ^2 u}{x}+\\
	\frac{16 \alpha  \rho ^2 u}{x}-16 \beta ^2 u x+\frac{4 \Gamma  \rho ^4 u}{x}-\frac{3 \rho ^4 u}{x}+4 \Gamma  \rho ^4 \Delta x^{-\frac{\alpha }{\rho ^2}-\frac{1}{2}} e^{\frac{\lambda  x^{\delta }}{\delta  \rho }-\frac{\beta  x}{\rho ^2}-\frac{\Gamma x}{2}}-\\
	\left.4 \Gamma^2 \rho ^4 \Delta x^{\frac{3}{2}-\frac{\alpha }{\rho ^2}} e^{\frac{\lambda  x^{\delta }}{\delta  \rho }-\frac{\beta  x}{\rho ^2}-\frac{\Gamma x}{2}}+\rho ^4 \Delta x^{-\frac{\alpha }{\rho ^2}-\frac{1}{2}} e^{\frac{\lambda  x^{\delta }}{\delta  \rho }-\frac{\beta  x}{\rho ^2}-\frac{\Gamma x}{2}}\right)
\end{multline*}
and $H(t) = b K e^{-a (t-T)}, a>0$.

\subsubsection{$\delta=2\gamma -1$}

By using the Lie algebra $A^4_4$,
\begin{multline*}
	  \Span\left(\partial_{\tau},  e^{\frac{2 a}{(1-\gamma ) \rho ^2} \tau } \phi\partial _\phi, \frac{2 a}{(1-\gamma ) \rho ^2} \partial _x-B \phi\partial _\phi,\right.\\
	   \left. 2 e^{\frac{2 a}{(1-\gamma ) \rho ^2} \tau} \partial _x-e^{\frac{2 a}{(1-\gamma ) \rho ^2} \tau} \left(\frac{2 a}{(1-\gamma ) \rho ^2} x-2 B \tau\right)\phi \partial _\phi\right),
\end{multline*}
with $A=\frac{2 a}{(1-\gamma ) \rho ^2}$, we get the similarity solution
\begin{multline*}
	u(x,t)= x^{\frac{\gamma }{2}+\frac{\lambda }{\rho }} \exp \left(\frac{1}{8 \rho ^2}\left(\frac{B^2 (\gamma -1)^3 \rho ^8}{a^3}+\frac{4 B (\gamma -1) \rho ^4 x^{1-\gamma }}{a}-\right.\right.\\
	 4 B (\gamma -1)^2 \rho ^4  b^{1-\gamma } K^{1-\gamma } t e^{a (\gamma -1) (t-T)}+4 x^{1-2 \gamma } \left(\frac{2 \alpha }{2 \gamma -1}+\frac{\beta  x}{\gamma -1}\right)+\\
	\left.\left. \frac{4 a b^{1-2 \gamma } K^{1-2 \gamma } x^{-\gamma } e^{a (\gamma -1) (t-T)} \left(2 x b^{\gamma } K^{\gamma }-b K x^{\gamma } e^{a (\gamma -1) (t-T)}\right)}{\gamma -1}\right)\right),
\end{multline*}
for 
\begin{multline*}
	f(x,u) = \frac{x^{-2 (\gamma +1)u }}{8 (2 \gamma -1) \rho ^2} \left(4 a x^3 (2 \alpha -2 \gamma  (\alpha +\beta  x)+\beta  x)-\right.\\
	 4 a (\gamma -1) (2 \gamma -1) \rho  x^{2 \gamma +2} ((\gamma  \rho +2 \lambda )\log x -2 \rho  \log\lvert u\rvert)+\\
	(2 \gamma -1) \left(4 \rho  x^{2 \gamma +1} (2 \alpha  (\gamma  \rho +\lambda )+\beta  x ((2 \gamma -1) \rho +2 \lambda ))-4 B (\gamma -1)^2 \rho ^4 x^{\gamma +3}+\right.\\
	\left.\left.\rho ^2 x^{4 \gamma } \left((\gamma -2) \gamma  \rho ^2-4 \lambda ^2-4 \lambda  \rho \right)-4 x^2 (\alpha +\beta  x)^2\right)\right)
\end{multline*}	
 and $H(t) = b K e^{-a (t-T)}, a>0$.


\section{Conclusion}

In the present paper a generalization of a general bond-pricing equation,
\begin{equation*}
	u_t+\frac{1}{2}\rho^2x^{2\gamma} u_{xx}+(\alpha+\beta x-\lambda\rho x^\gamma)u_x-x u=0,
\end{equation*}
 was \changedText{suggested}\removedText{proposed} and studied under the view of the modern group analysis \changedText{with the intend  to use the obtained information to \emph{propose} nonlinear models with potential significance to Financial Mathematics}. To that end, we harnessed the advantage that the equivalence transformations offer when studying classes of differential equations: the knowledge  of the best representative(s) for this class of equations. This fact substantially simplifies the task of classifying it and obtaining its point symmetries.  

Through this classification interesting cases from the point of view of symmetries arise. Actually, the classical models of Financial Mathematics mentioned briefly in the introduction \removedText{are resurfaced} resurface, this further solidifies the fact that their significance and place in Financial Mathematics is well justified. Nonetheless the significance of the case $\delta = 2\gamma -1$,
$$
	u_t+\frac{1}{2}\rho^2x^{2\gamma} u_{xx}+(\alpha+\beta x-\lambda\rho x^{2\gamma -1})u_x-f(x,u)=0,
$$
is yet to be determined in the literature.

\changedText{Finally, with the help of equivalence transformations we prove that the whole class of equations of the form
\begin{equation*}
	u_t+\frac{1}{2}\rho^2x^{2\gamma} u_{xx}+(\alpha+\beta x-\lambda\rho x^\delta)u_x-f(x,u)=0,
\end{equation*}
can be transformed to the heat equation with nonlinear source,
\begin{equation*}
	\hat u_{\hat t}=\hat u_{\hat x\hat x}+\hat f(\hat x,\hat u).
\end{equation*}
A fact that greatly simplifies the complete group classification since now we have only one arbitrary element, the function $f(x,u)$.}

Nonlinear equations in general have few or no symmetries so cases that augment the set of symmetries at disposal are like an oasis in the desert. After all, it is evident in the related  literature that a dynamical system possessing an ample number of symmetries is more probable to relate with a physical system or model a more realistic process. \changedText{And indeed this is the case with the classical financial models: they keep appearing in group classifications of general models that include them, see for example  \cite{SiLeaHa2k8, MoKhaMo2k14, MaMaNaMo2k13}.}

This fact is even more decisive when we wish to study a boundary problem: since not all of the symmetries admit the boundary and its condition: some --- or \changedText{even} all --- of the symmetries \removedText{will}\changedText{could}  be excluded. Hence the bigger the set of symmetries the bigger the probability that some \removedText{will} survive the scrutiny of the boundary conditions and give an invariant solution for the problem in its entirety. 

This is evident for the terminal condition. As it can be seen by the examples  three- and four-dimensional algebras were used in order to arrive to a nontrivial solution. A lower dimensional algebra seems to be unable to yield a nontrivial solution. 

Things are different when \changedText{we consider} the barrier option \removedText{is considered}: because of the two arbitrary functions $H(t),R(t)$ a broader range of cases can yield interesting solutions. It is worth mentioning at this point that, as can be seen by the \changedText{solutions given in section $4$}, the barrier function $H$ usually used in the related literature is admitted by the symmetries. A fact that further strengthens the belief that  symmetries can be a valuable tool in investigating this kind of financial problems and the importance of this particular choice for the barrier function in Financial Mathematics.

Indeed, the insight provided through the above symmetry analysis might prove practical to anyone looking for a more realistic economic model without departing from the reasoning behind the proposed general bond-pricing model. \changedText{Indeed, not only we found specific cases that admit three- and four-dimensional Lie algebras but also constructed nontrivial solutions for the terminal boundary problem and the barrier option problem. Furthermore, the fact that those nonlinear equations are linked to the heat equation with a nonlinear source like the classical models to the heat equation reinforces that belief.}

\changedText{For all the above reasons it is our opinion that the }\removedText{The  provided}\changedText{suggested} nonlinear variant of \removedText{this model}\changedText{the general bond-pricing model --- along with its group classification ---} might be deemed useful in \changedText{Financial Mathematics}\removedText{that respect}.  \changedText{Moreover}\removedText{Even more}  when one studies more exotic kinds of options. Options that have gained ground in the Asian markets which in turn play an ever increasing role in the world market.   We leave to the interested reader the possible economical interpretation and use of the obtained results. 

\section*{Acknowledgements} 

S. Dimas is grateful to FAPESP (Proc. \#2011/05855-9)  for the financial support and IMECC-UNICAMP for their gracious hospitality. We would also like to thank Prof. Peter Leach for his helpful remarks and insight.

\bibliographystyle{model3-num-names}%
\bibliography{Bibliography.bib}

\end{document}